\newtheorem{thm}{Theorem}[section]
\newtheorem{lem}[thm]{Lemma}
\theoremstyle{definition}
\newtheorem{defn}[thm]{Definition}
\newtheorem{rem}[thm]{Remark}
\newcounter{labelflag} \setcounter{labelflag}{0}
\newcommand{\Label}[1]{
                       \ifnum\thelabelflag=1
                          \ifmmode
                             \makebox[0in][l]{\qquad\fbox{\rm#1}}
                          \else
                             \marginpar{\vspace{0.7\baselineskip}
                                        \hspace{-1.1\textwidth}
                                        \fbox{\rm#1}}
                          \fi
                       \fi
                       \label{#1} }
\newcommand{\be}{\begin{equation}}
\newcommand{\ee}{\end{equation}}
\newcommand{\R}{\mathbb{R}}
\newcommand{\N}{\mathbb{N}}
\newcommand{\E}{\mathbb{E}}
 \def  \eps { { \varepsilon } }
\begin{document}

\baselineskip=1.3\baselineskip

\begin{titlepage}
\title{\large  \bf \baselineskip=1.3\baselineskip
Non-autonomous  stochastic lattice systems with Markovian switching \footnote{This work was supported
 by NSFC (11971394), Central Government Funds for Guiding Local Scientific and Technological Development (2021ZYD0010)
  and Fundamental Research Funds for the Central Universities (2682021ZTPY057).}  }
\vspace{10mm}

\author{{ Dingshi Li, Yusen Lin\footnote{Corresponding authors:
linyusen@my.swjtu.edu.cn (Y. Lin).}, Zhe Pu}
 \\{ \small\textsl{
 School of Mathematics, Southwest Jiaotong University, Chengdu, 610031, P. R. China}}}

\date{}
\end{titlepage}

\maketitle

{ \bf Abstract} \ \ \
The aim of this paper is to study   the dynamical behavior of
 non-autonomous  stochastic  lattice systems with Markovian switching.
We first  show  existence  of  an evolution system of measures  of  the stochastic system.
We then study the pullback (or forward) asymptotic stability in distribution  of
the evolution system of measures.
 We finally prove that any limit point of a tight sequence of
an evolution system of measures of the   stochastic  lattice systems
must be  an evolution system of measures of the corresponding
 limiting  system as the intensity of noise converges zero.
 In particular, when the coefficients are periodic with respect to  time,
 we show every  limit point of a   sequence of
 periodic measures of the   stochastic   system
 must be a periodic measure of the limiting  system as  the  noise intensity
 goes to zero.

{\bf Keywords.}     Non-autonomous; Markovian switching;
Evolution system of  measures;    Limit measure.

 {\bf MSC 2010.} Primary  37L55; Secondary 34F05, 37L30, 60H10

\medskip

\section{Introduction}
\setcounter{equation}{0}

Let $(W_k)_{k\in \mathbb N}$ be a sequence of independent
standard two-side Wiener processes on a complete filtered probability space $
( \Omega ,\mathcal F, \{\mathcal F_t  \}_{t\in \R} ,P )$  satisfying  the usual condition
and $r(t)$, $t\in \R$, be a right continuous Markov chain,  independent of
the Brownian motion $(W_k)_{k\in \mathbb N}$,  on the
probability space $
( \Omega ,\mathcal F, \{\mathcal F_t  \}_{t \in \R} ,P )$ taking values in a finite state space $S = \left\{
{1,2, \ldots ,N} \right\}$ with generator $\Gamma  = {\left(
{{r_{ij}}} \right)_{N \times N}}$ given by
\[P\left\{ {r\left( {t + \Delta } \right) = j\left| {r\left( t \right)
= i} \right.} \right\} = \left\{ {\begin{array}{*{20}{c}}
   {{r_{ij}}\Delta  + o\left( \Delta  \right),\;\;\;\ i \ne j;} \hfill  \\
   {1 + {r_{ij}}\Delta  + o\left( \Delta  \right),\;\;\;\ i = j,} \hfill  \\
\end{array}} \right.\]
where $\Delta  > 0$ and ${\lim _{\Delta  \to 0}}o\left( \Delta
\right)/\Delta  = 0, {r_{ij}} \ge 0$ is the transition rate from $i$
to $j$ if $i\ne j$ and ${r_{ii}} =  - \sum\nolimits_{i\ne j}
{{r_{ij}}} $. It
is well known that almost every sample path of $r(t)$ is a right-continuous step function
and $r(t)$ is ergodic.

In this paper, we study the limiting behavior of
 evolution system of measures of the nonautonomous stochastic  lattice system with Markovian switching defined
on the integer set $\mathbb Z$: for $s\in \R$,
 \begin{align}\label{eu1}
\begin{split}
 du_i \left( t \right)& - \nu \left( {u_{i - 1} \left( t \right) - 2u_i \left( t \right)
 + u_{i + 1} \left( t \right)} \right)dt + \lambda(r(t)) u_i \left( t \right)dt \\
  &= \left( {f_i \left( {t,r(t),u_i(t)} \right) + g_i(r(t)) } \right)dt\\
  &\quad + \varepsilon\sum\limits_{k = 1}^\infty  {\left( {h_{i,k}(r(t))
   + \sigma _{i,k}
   \left( {t,r(t),u_i \left( t \right)} \right)} \right)} dW_k \left( t \right),\quad t > s,
 \end{split}
\end{align}
with initial data
\begin{equation}\label{eu2}
u_i \left( s\right) = \xi _i  \quad \text{and}\quad r(s)=j\in S,
\end{equation}
where  $u=(u_i)_{i\in \mathbb Z}$ is an unknown sequence, $\xi=(\xi_i)_{i\in \mathbb Z}\in l^2$
is  given,  $0<\varepsilon \le 1$, $\nu>0$, for $j\in S$  $\lambda(j)>0$,  $g(j)=(g_i(j))_{i\in \mathbb Z}$
and $h(j)=(h_{i,k}(j))_{i\in \mathbb Z,k\in \mathbb N}$ are
given   in $l^2$, and $f_i, \sigma_{i,k}: \mathbb R\times S \times\mathbb R  \rightarrow \mathbb R$
are nonlinear   functions for  every  $i\in \mathbb Z$ and $k\in \mathbb N$.

 We mention that lattice systems have many applications
in practice and have been extensively investigated.
For stochastic lattice systems  without time-dependent forcing, the
existence of random attractors was proved in  \cite{H2019,CHSV2016,CMV2012,BLL2006,CL2008}. The existence
of random attractors was also obtained in  \cite{SWHP,WangX1,BLW2014} for the systems with time-dependent
forcing.
The dynamical behavior  of invariant measures
  of stochastic lattice systems was obtained   in \cite{CLW, WW1,WW2,W2019,LWW2021}
 and  the limiting behavior of periodic  measures of stochastic lattice systems with periodic forcing term
 was studied in \cite{LWW2022}. The concept of evolution system of measures was introduced by \cite{DG2006}.
 It is the natural generalization of the notion of an invariant measure to
non-autonomous systems.
 Recently, in \cite{WCT2022}, Wang et. al. studied the limiting behavior of evolution system of measures
 of non-autonomous stochastic lattice systems.   There is an extensive literature on
existence and stability of invariant measure for stochastic
 differential equations with Markovian switching, see e.g., \cite{YM2003,YZM2003,DDD2014}.
However, there is so far no result of evolution system of measures
of non-autonomous stochastic  lattice systems with Markovian switching.

This paper is concerned with the theory of evolution system of measures of
nonhomogeneous Markov processes generated by stochastic differential
equations with both non-autonomous deterministic and
Markovian switching. We will prove a sufficient  condition for existence
and pullback (or forward) asymptotic stability in distribution
 of evolution system of measures
 for such processes. We will also show the effect of evolution system of measures
 for a family of such processes from parameter disturbance.   For periodic Markov processes,
we prove the evolution system of measures are also periodic.
Periodic measures for SPDEs was  studied in \cite{DD2008,LL2021}.
As an application of our abstract results, we will investigate the existence,
  pullback (or forward) asymptotic stability in distribution,
 and the limiting behavior
  of  evolution system of measures of \eqref{eu1}-\eqref{eu2}
  as the noise intensity $\eps \to  0$.

The rest of this paper is organized as follows.
Section 2  is devoted to
 the existence,  stability and periodicity
of evolution system of measures of time
nonhomogeneous  Markov processes.
In Section 3, we show the limiting behavior
of evolution system of measures of time
nonhomogeneous  Markov processes.
Section 4 is devoted to the existence and uniqueness
of solutions to the stochastic  lattice system \eqref{eu1}-\eqref{eu2}.
In Section 5, we derive the uniform estimates
of solutions  which are needed
  for proving  our main results in later sections.
   In Section 6, we
   establish
the existence, stability and periodicity of evolution system of measures
on $l^2$ for \eqref{eu1}-\eqref{eu2}
 and prove   the convergence of evolution system of measures
of system \eqref{eu1}-\eqref{eu2} as $\varepsilon \to 0$.

\section{Existence and Stability}
\setcounter{equation}{0}

In what follows, we denote by $X$  a Polish space with a metric $d_X$  and
denote by $H$ a separable Banach space with norm  $\|\cdot\|_H$, respectively. Define
$C_b(X)$ as the space of bounded continuous functions $f:X\rightarrow \R$
endowed with the norm
\[
\left\| f \right\|_\infty   = \mathop {\sup }\limits_{x \in X} \left| {f\left( x \right)} \right|,
\]
and denote by $L_b(X)$ the space of bounded Lipschitz functions on $X$. That is,
of functions $f\in C_b(X)$ for which
\[
\text{Lip}\left( f \right): = \mathop {\sup }\limits_{x_1 ,x_2  \in X} \frac{{\left| {f\left( {x_1 } \right)}  -
{f\left( {x_2 } \right)} \right|}}{{\text{dist}_X \left( {x_1 ,x_2 } \right)}} < \infty .
\]
The space $L_b(X)$ is endowed with the norm
\[
\left\| f \right\|_L  = \left\| f \right\|_\infty   + \text{Lip}\left( f \right).
\]
Let us denote by $\mathcal P(X)$ the set of probability measures on $(X,\mathcal B(X))$.
Define a metric on $\mathcal P(X)$ by
\[
\text{d}_L^* \left( {\mu _1 ,\mu _2 } \right) = \mathop {\sup }\limits_{\scriptstyle f \in L_b \left( X \right) \hfill \atop
  \scriptstyle \left\| f \right\|_L  \le 1 \hfill} \left| {\left( {f,\mu _1 } \right) - \left( {f,\mu _2 } \right)} \right|,
  \quad \mu_1,\mu_2\in \mathcal P(X).
\]
Given $s,t\in \R$ and $s\leq t$, we let $r_{s,j}(t)$ be the Markov chain starting from state $j\in S$ at $t = s$ and
let $u(t,s,\xi,j)$ be a  stochastic process with initial conditions $u(s,s,\xi,j) = \xi \in H$ and
$r(s) = j$ at initial time $t=s$.
Let $y(t,s,\xi,j)$ denote the $(H \times S)$-valued process $(u (t,s,\xi,j), r_{s,j}(t))$ and  $y(t,s,\xi,j)$ be a time
nonhomogeneous  Markov process. Let $p(t,  s, \xi, j, (dy , {k}))$ denote the transition probability
of the process $y(t,s,\xi,j)$. For $A\subset \mathcal B(H)$ and $B\subset S$,
 let $P(t, s, \xi,  j, A \times B)$ denote the probability of event $\{y(t,s,\xi,j)\in A \times B\}$
given initial condition $y(s,s,\xi,j) = (\xi, j)$ at time $t=s$, i.e.,
\[
P\left( {t,s, \xi,j,A \times B} \right) = \sum\limits_{k \in B}
{\int_A {p\left( {t,s,\xi, j,(dy,k)} \right)} } .
\]

We define the transition evolution operator
\[
P_{s,t} \varphi \left( {\xi ,j} \right) = \E\left[ {\varphi \left( {u\left( {t,s,\xi ,j}
 \right),r_{s,j} \left( t \right)} \right)} \right], \quad \varphi\in C_b(H \times S).
\]
Assume that $P_{s,t}$ is Feller, that is, $P_{s,t}:C_b(H\times S)\rightarrow C_b(H\times S)$, for $s<t$.
Denote by $P_{s,t}^*: \mathcal P(H\times S)\rightarrow \mathcal P(H\times S)$ the duality
operator of $P_{s,t}$. For $(\xi,j)\in H\times S$,
denote by $\delta_{\xi ,j}$ the Dirac measure  concentrating   on  $(\xi,j)$.

In this section, we show existence and stability of   an evolution system of measures $
\left( {\mu _t } \right)_{t \in \R}$
 indexed by $\R$. An evolution system of measures $
\left( {\mu _t } \right)_{t \in \R}$ satisfies
 each $\mu _t$, $t\in \R$, is a probability measure on $H\times S$ and
 \[\sum\limits_{j \in S}
\int_{H} {P_{s,t} \varphi \left( \xi,j \right)\mu _s \left( {d\xi,j} \right)}  = \sum\limits_{j \in S}
\int_{H} {\varphi \left( \xi,j \right)\mu _t \left( {d\xi,j} \right),\quad\forall
 \varphi  \in C_b \left( H\times S \right)} ,\quad s < t.
\]
For $\varpi>0$, the evolution system of measures $\mu _t$, $t\in \R$ is $\varpi$-periodic, if
$$\mu _t=\mu _{t+\varpi},\quad  \forall t\in \R.
$$

We now recall the definition of pullback (or forward) asymptotic stability in distribution of the evolution system of measures.

\begin{defn}
The evolution system of measures $\left( {\mu _t } \right)_{t \in \R}$ of
Markov processes $y(t,s,\xi,j)$
is said to be
pullback  asymptotic stability in distribution if for any $\varphi\in C_b(H\times S)$,
\[
\mathop {\lim }\limits_{s \to  - \infty } P_{s,t} \varphi \left( \xi,j \right) =
\sum\limits_{j \in S}\int_{H} {\varphi \left(x,j \right)\mu _t \left( {dx,j} \right)} ,
\quad \forall t \in \R,\quad (\xi,j) \in H\times S,
\]
and be forward  asymptotic stability in distribution if for any $\varphi\in C_b(H\times S)$,
 \[
\mathop {\lim }\limits_{t \to +\infty } \left[ {P_{s,t} \varphi \left( \xi,j \right) -
\sum\limits_{j \in S}\int_{H} {\varphi \left( x,j \right)\mu _t \left( {dx,j} \right)} } \right]
= 0,\quad\forall s \in \R,\quad (\xi,j) \in H\times S.
\]
\end{defn}
Forward  asymptotic stability in distribution implies that $P_{s,t} \varphi \left( x \right)$
 approaches as $t\rightarrow +\infty$ a curve, parametrized
by $t$, which is independent of $s$ and $(\xi,j)$. This is the natural generalization of the strongly
mixing property for an autonomous dissipative system.

In the sequence, we always assume that

$( A_0)$  For
any $s\in \R$, $T>0$, bounded set $B\subset H$ and $\eta>0$, there exists a
constant $R=R(\eta,B,T)>0$, independent of $s$, such that
for any $\xi\in B$ and $j\in S$,
\[
P\left\{ {\|{u\left( {t,s,\xi ,j} \right)}\|_H \ge R},\, t\in [s,s+T] \right\} < \eta.
\]

\begin{defn}

The processes  $u(t,s,\xi,j)$  are said to have properties:

$(A_1)$  If for
any $s\in \R$,  $\xi\in H$ and $\eta>0$, there exists a  bounded
 subset $B=(\eta,\xi)$ of $H$, independent of $s$, such that
for any $\xi\in H$, $j\in S$ and $t\geq s$,
\[
P\left\{ {{u\left( {t,s,\xi ,j} \right)}\in B} \right\}>1-\eta .
\]

$(A_2)$  If for any $s\in \R$, $\eta>0$ and  bounded
 subset $B$ of $H$, there exists a $T=T(\eta,B)$, independent of $s$, such that
for $\left( {\xi _1 ,\xi _2 ,j} \right) \in B \times B \times S,$
\[
P\left\{ {\| {u\left( {t,s,\xi _1 ,j} \right)- u\left( {t,s,\xi _2 ,j} \right)} \|_H <
 \eta } \right\} \ge 1 - \eta,\quad \forall t-s \ge {\rm{T}}.
\]

\end{defn}

\begin{defn}

The processes  $u(t,s,\xi,j)$  are said to have properties:

$(A_1^*)$  If for
any $s\in \R$,  $\xi\in H$ and $\eta>0$, there exists a
compact  $K=(\eta, \xi)\subset H$, independent of $s$, such that
for any $\xi\in H$, $j\in S$ and $t\geq s$,
\[
P\left\{ {{u\left( {t,s,\xi ,j} \right)}\in K} \right\} >1-\eta.
\]

$(A_2^*)$  If for any $s\in \R$, $\eta>0$ and any compact
 subset $K$ of $H$, there exists a $T=T(\eta,K)$, independent of $s$, such that
for $\left( {\xi _1 ,\xi _2 ,j} \right) \in K \times K\times S,$
\[
P\left\{ {\| {u\left( {t,s,\xi _1 ,j} \right)- u\left( {t,s,\xi _2 ,j} \right)} \|_H <
 \eta } \right\} \ge 1 - \eta,\quad \forall t-s \ge {\rm{T}}.
\]

\end{defn}

\begin{lem}\label{Lcijp}
Assume that the processes  $u(t,s,\xi,j)$ have property $(A_2)$. Then, for any bounded
set $B\subset H$,
\[
\mathop {\lim }\limits_{s \to  - \infty }
d_{\rm{L}}^* \left( {P_{s,t}^ *  \delta_{\xi _1 ,i} ,P_{s,t}^ *  \delta_{\xi _2 ,j} } \right) = 0
\]
uniformly in $\xi_1,\xi_2\in B$ and $j_1,j_2 \in S$.

\end{lem}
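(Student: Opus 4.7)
The plan is a two-scale coupling argument. First, exploit ergodicity of the finite-state Markov chain to bring both $S$-components into agreement after a finite time $T_0$, so both processes become driven by a common Markov-chain trajectory. Then invoke $(A_2)$ to contract the $H$-components over the remaining time. Concretely, construct on a common probability space a coupling $(\tilde r^{(1)},\tilde r^{(2)})$ of two copies of $r(\cdot)$ starting at $j_1$ and $j_2$ at time $s$, using the standard ``run independently and merge at first meeting'' coupling. By ergodicity on the finite set $S$ and time-homogeneity of the generator $\Gamma$, the coupling time $\tau=\inf\{u\ge s:\tilde r^{(1)}(u)=\tilde r^{(2)}(u)\}$ satisfies $\sup_{j_1,j_2\in S}P(\tau-s>T)\to 0$ as $T\to\infty$, uniformly in $s$. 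Drive both chains with the same Wiener sequence $(W_k)$, and let $\tilde u_\alpha$ denote the solution governed by $(\tilde r^{(\alpha)},W)$ with initial datum $\xi_\alpha$; each $\tilde u_\alpha(t)$ has marginal law $P_{s,t}^*\delta_{\xi_\alpha,j_\alpha}$.

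Fix $\eta>0$ and pick $T_0=T_0(\eta)$ so that $P(\tau-s>T_0)\le\eta$ uniformly in $s,j_1,j_2$. The standing hypothesis $(A_0)$ applied on the interval of length $T_0$ then furnishes $R_1=R_1(\eta,B,T_0)$, independent of $s$, with $P(\|\tilde u_\alpha(s+T_0)\|_H>R_1)\le\eta$ for all $\xi_\alpha\in B$ and $j_\alpha\in S$. Set $B':=\{x\in H:\|x\|_H\le R_1\}$ and apply $(A_2)$ with tolerance $\eta$ and bounded set $B'$ to obtain $T_1=T_1(\eta,B')$, independent of $s$, such that
\[
P\bigl\{\|u(t',s',\xi'_1,k)-u(t',s',\xi'_2,k)\|_H<\eta\bigr\}\ge 1-\eta
\]
for every $(\xi'_1,\xi'_2,k)\in B'\times B'\times S$ and $t'-s'\ge T_1$. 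For $s\le t-T_0-T_1$, let $E:=\{\tau\le s+T_0\}\cap\{\tilde u_1(s+T_0)\in B'\}\cap\{\tilde u_2(s+T_0)\in B'\}$, so $P(E^c)\le 3\eta$. On $E$ the chains have coalesced by time $s+T_0$ into a common value $k$; conditioning on $\mathcal F_{s+T_0}$ and using the Markov property, the future evolution of $(\tilde u_1,\tilde u_2)$ is that of two solutions starting at common Markov state $k$ from initial data in $B'\times B'$, and $(A_2)$ yields $P(\|\tilde u_1(t)-\tilde u_2(t)\|_H<\eta\mid E)\ge 1-\eta$. Combined with stickiness of the chain coupling, the event $G:=\{\tilde r^{(1)}(t)=\tilde r^{(2)}(t)\}\cap\{\|\tilde u_1(t)-\tilde u_2(t)\|_H<\eta\}$ satisfies $P(G)\ge 1-5\eta$.

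Equipping $H\times S$ with the metric $d((x,k),(y,l))=\|x-y\|_H+\mathbf 1_{\{k\ne l\}}$, every $\varphi\in L_b(H\times S)$ with $\|\varphi\|_L\le 1$ obeys $|\varphi(\tilde u_1(t),\tilde r^{(1)}(t))-\varphi(\tilde u_2(t),\tilde r^{(2)}(t))|\le\eta$ on $G$ (since $\mathrm{Lip}(\varphi)\le 1$) and $\le 2\|\varphi\|_\infty\le 2$ on $G^c$. Taking expectations and using that the marginals of the coupling coincide with $P_{s,t}^*\delta_{\xi_\alpha,j_\alpha}$, we obtain
\[
d_L^*\bigl(P_{s,t}^*\delta_{\xi_1,j_1},\,P_{s,t}^*\delta_{\xi_2,j_2}\bigr)\le 11\eta
\]
whenever $s\le t-T_0-T_1$, with $T_0,T_1$ independent of $s$, $\xi_1,\xi_2\in B$, and $j_1,j_2\in S$. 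Letting $\eta\to 0$ yields the desired uniform limit. The main technical point is the joint construction of $(\tilde u_1,\tilde u_2)$ driven by coupled Markov chains but a common Wiener noise so that, after $\tau$, the pair becomes exactly two solutions started at a common state $k\in S$, making $(A_2)$ directly applicable via conditioning on $\mathcal F_{s+T_0}$.
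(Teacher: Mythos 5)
Your proof is correct and follows essentially the same route as the paper: couple the two Markov chains until they coalesce (with a uniform tail bound on the coupling time from ergodicity), use $(A_0)$ to confine both solutions to a ball at that moment, and then apply $(A_2)$ from the common state onward to contract the $H$-components, finally testing against Lipschitz functions. The only cosmetic difference is that you condition at the deterministic time $s+T_0$ rather than at the stopping time $\tau$ as the paper does, which changes nothing of substance.
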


\begin{proof}
Choose any nonotone decreasing sequence $\left\{ {s_n } \right\}_{n = 1}^\infty$
satisfying $s_n  < t,\,\,\forall n \in \N,$ and
 $s_n  \to  - \infty ,\,\, \text{as}\,\, n \to \infty.$
For any pair of $j_1,j_2\in S$ and $n\in \N$, define the stopping time
\[
\tau _{j_1,j_2}^n  = \inf \left\{ {t - s_n|t \ge s_n,r_{s_n,j_1} \left( t \right) = r_{s_n,j_2} \left( t \right)} \right\}.
\]
Recall that $r_{s_n,j}(t)$ is the Markov chain starting from state $j\in S$ at initial time $t = s_n$ and due to the
ergodicity, for any $n\in \N$,
 $\tau _{j_1,j_2}:=\tau _{j_1,j_2}^{n}<\infty$ a.s.
 By homogeny  of the Markov chain, for any $\eta>0$ and $n\in \N$,
there exists a positive number $T$, independent of $n$, such that
\begin{align*}
P\left\{ {\tau _{j_1,j_2}  \le T} \right\} > 1 - \frac{\eta }{8},\quad \forall {j_1,j_2} \in S.
\end{align*}
For such $T$, by $(A_0)$, there is a sufficiently large $R>0$ such that
\begin{align}\label{cij1}
P\left( {\Omega _{\xi ,j} } \right) > 1 - \frac{\eta }{{16}},\quad \forall \left( {\xi ,j} \right) \in B \times S,
\end{align}
where $\Omega _{\xi ,j}  = \left\{ {\left\| {u\left( {t,s_n,\xi ,j} \right)}
 \right\|_H \le R,\forall t \in \left[ {s_n,s_n + T} \right]} \right\}.$
For any given $\xi_1,\xi_2\in B$ and $j_1,j_2 \in S$, set
$\Omega ^{'}  = \Omega _{\xi _1 ,j_1}  \cap \Omega _{\xi _2 ,j_2}.$
For any $\varphi\in L_b(H\times S)$ and $s_n<t-T$,
\begin{align}\label{cij2}
\begin{split}
 &\left| {\left( {\varphi ,P_{s_n,t}^ *  \delta_{\xi _1 ,j_1} } \right) -
 \left( {\varphi ,P_{s_n,t}^ *  \delta_{\xi _2 ,j_2} } \right)} \right| \\
  &= \left| {\E\varphi \left( {u\left( {t,s_n,\xi _1 ,j_1} \right),r_{\left\{ {s_n,j_1} \right\}} \left( t \right)} \right) -
   \E\varphi \left( {u\left( {t,s_n,\xi _2 ,j_2} \right),r_{\left\{ {s_n,j_2} \right\}} \left( t \right)} \right)} \right| \\
  &\le 2P\left\{ {\tau _{j_1,j_2}  > T} \right\} + \E\left( {I_{\left\{ {\tau _{j_1,j_2}  \le T} \right\}}
  \left| {\varphi \left( {u\left( {t,s_n,\xi _1 ,j_1} \right),r_{\left\{ {s_n,j_1} \right\}} \left( t \right)} \right)
   - \varphi \left( {u\left( {t,s_n,\xi _2 ,j_2} \right),r_{\left\{ {s_n,j_2} \right\}} \left( t \right)} \right)} \right|} \right) \\
  &\le \frac{\eta}{4} + \E\left( {I_{\left\{ {\tau _{j_1,j_2}  \le T} \right\}}
   \E\left( {\left| {\varphi \left( {u\left( {t,s_n,\xi _1 ,j_1} \right),r_{\left\{ {s_n,i} \right\}}
    \left( t \right)} \right) - \varphi \left( {u\left( {t,s_n,\xi _2 ,j_2} \right),r_{\left\{ {s_n,j_2} \right\}}
    \left( t \right)} \right)} \right||\mathcal F_{\tau _{j_1,j_2} } } \right)} \right) \\
 & \le \frac{\eta }{4} + \E\left( {I_{\left\{ {\tau _{j_1,j_2}  \le T} \right\}}
 \E\left( {\left| {\varphi \left( {u\left( {t ,\tau _{j_1,j_2} ,\xi _1^* ,k}
 \right),r_{\left\{ {\tau _{j_1,j_2} ,k} \right\}} \left( t \right)} \right) - \varphi
  \left( {u\left( {t  ,\tau _{j_1,j_2} ,\xi _2^* ,k} \right),r_{\left\{ {\tau _{j_1,j_2} ,k}
   \right\}} \left( t \right)} \right)} \right|} \right)} \right) \\
 & \le \frac{\eta }{4} + \E\left( {I_{\left\{ {\tau _{j_1,j_2}  \le T} \right\}}
 \E\left( {2 \wedge \| {u\left( {t,\tau _{j_1,j_2} ,\xi _1^* ,k} \right)-
  u\left( {t ,\tau _{j_1,j_2} ,\xi _2^* ,k} \right)} \|_H} \right)} \right) \\
& \le \frac{\eta }{4} + 2P\left( {\Omega  - \Omega ^{'} } \right) +
 \E\left( {I_{\Omega ^{'} \cap \left\{ {\tau _{j_1,j_2}  \le T} \right\}}
  \E\left( {2 \wedge \| {u\left( {t ,\tau _{j_1,j_2} ,\xi _1^* ,k} \right)-
    u\left( {t  ,\tau _{j_1,j_2} ,\xi _2^* ,k} \right)} \|_H} \right)} \right),
 \end{split}
 \end{align}
where $\xi _1^ *   = u\left( {\tau _{j_1,j_2} ,s_n ,\xi _1 ,j_1} \right)
$, $\xi _2^ *   = u\left( {\tau _{j_1,j_2} ,s_n ,\xi _2 ,j_2} \right)$
and $k = r_{s_n ,j_1} \left( {\tau _{j_1,j_2} } \right) = r_{s_n ,j} \left( {\tau _{j_1,j_2} } \right).$
Note that given $\omega  \in \Omega ^{'}  \cap \left\{ {\tau _{j_1,j_2}  \le T} \right\},
$ $\left\| {\xi _1^* } \right\| \vee \left\| {\xi _2^* } \right\| \le R$.
So, by $(A_2)$,
there exists a constant $T_1>T$  such that
\begin{align}\label{cij3}
\E\left( {2 \wedge \|{u\left( {t  ,\tau _{j_1,j_2} ,\xi _1^* ,k} \right)-
 u\left( {t,\tau _{j_1,j_2} ,\xi _2^* ,k} \right)} \|_H} \right) < \frac{\eta }{2},\quad t-s_n>T_1 .
\end{align}
It therefore follows from \eqref{cij1}-\eqref{cij3} that
\[
\left| {\E\varphi \left( {u\left( {t,s_n,\xi _1 ,j_1} \right),r_{\left\{ {s_n,j_1} \right\}}
 \left( t \right)} \right) - \E\varphi \left( {u\left( {t,s_n,\xi _2 ,j_2} \right),r_{\left\{ {s_n,j_1} \right\}}
  \left( t \right)} \right)} \right| \le \frac{\eta}{4}
  + \frac{\eta }{4} + \frac{\eta }{2}, \quad t-s_n>T_1 .
\]
Since $\varphi$ is arbitrary, we must have
\[
d_{\rm{L}}^* \left( {P_{s_n,t}^ *  \delta_{\xi _1 ,j_1} ,P_{s_n,t}^ *
 \delta_{\xi _2 ,j_2} } \right)\leq \eta, \quad t-s_n>T_1 ,
\]
for all $\xi _1,\xi _2\in B$ and $j_1,j_2\in S$.
By the $\left\{ {s_n } \right\}_{n = 1}^\infty$ chosen arbitrarily, the proof is completed.

\end{proof}

Repeating the scheme used in the proof Lemma \ref{Lcijp}, we get the following result.

\begin{lem}\label{Lcijf}
Assume that the processes  $u(t,s,\xi,j)$  have property $(A_2)$. Then, for any bounded
subset $B$ of $H$,
\[
\mathop {\lim }\limits_{t \to  + \infty }
d_{\rm{L}}^* \left( {P_{s,t}^ *  \delta_{\xi _1 ,j_1} ,P_{s,t}^ *  \delta_{\xi _2 ,j_2} } \right) = 0
\]
uniformly in $\xi_1,\xi_2\in B$ and $j_1,j_2 \in S.$

\end{lem}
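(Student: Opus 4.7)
The plan is to mirror the coupling argument of Lemma \ref{Lcijp}, but with the roles of the initial and terminal times interchanged: here $s\in\R$ and the initial data $\xi_1,\xi_2\in B$ are fixed, while we drive $t\to+\infty$. First I would exploit the recurrence of the finite-state, time-homogeneous Markov chain $r(\cdot)$ to introduce, for each pair $j_1,j_2\in S$, the coupling time
\[
\tau_{j_1,j_2}=\inf\{t-s:\ t\ge s,\ r_{s,j_1}(t)=r_{s,j_2}(t)\},
\]
which is finite a.s.\ and whose law depends only on $(j_1,j_2)$. Given $\eta>0$, pick $T=T(\eta)>0$ such that $P\{\tau_{j_1,j_2}\le T\}>1-\eta/8$ uniformly in $j_1,j_2\in S$. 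Since $S$ is finite and $\tau_{j_1,j_2}$ does not involve $t$, this $T$ is also independent of $t$.

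Next I would invoke hypothesis $(A_0)$ on the interval $[s,s+T]$ to choose $R=R(\eta,B,T)>0$ with
\[
P(\Omega_{\xi,j})>1-\eta/16,\qquad \Omega_{\xi,j}=\{\|u(r,s,\xi,j)\|_H\le R,\ \forall r\in[s,s+T]\},
\]
for every $(\xi,j)\in B\times S$. Setting $\Omega'=\Omega_{\xi_1,j_1}\cap\Omega_{\xi_2,j_2}$, on the event $\Omega'\cap\{\tau_{j_1,j_2}\le T\}$ both coupled states
\[
\xi_1^{*}=u(\tau_{j_1,j_2},s,\xi_1,j_1),\qquad \xi_2^{*}=u(\tau_{j_1,j_2},s,\xi_2,j_2)
\]
lie in the closed ball $B_R$ of $H$, and from that time onward the chains agree at the common value $k=r_{s,j_1}(\tau_{j_1,j_2})=r_{s,j_2}(\tau_{j_1,j_2})$.

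Then, for any $\varphi\in L_b(H\times S)$ with $\|\varphi\|_L\le 1$, applying the strong Markov property at $\tau_{j_1,j_2}$ (exactly as in the estimate \eqref{cij2} of Lemma \ref{Lcijp}) yields
\begin{align*}
&\bigl|(\varphi,P_{s,t}^{*}\delta_{\xi_1,j_1})-(\varphi,P_{s,t}^{*}\delta_{\xi_2,j_2})\bigr|\\
&\quad\le 2P\{\tau_{j_1,j_2}>T\}+2P(\Omega\setminus\Omega')\\
&\qquad+\E\Bigl(I_{\Omega'\cap\{\tau_{j_1,j_2}\le T\}}\,\E\bigl(2\wedge\|u(t,\tau_{j_1,j_2},\xi_1^{*},k)-u(t,\tau_{j_1,j_2},\xi_2^{*},k)\|_H\bigr)\Bigr).
\end{align*}
Applying hypothesis $(A_2)$ to the bounded set $B_R$ produces $T_1=T_1(\eta,R)$, independent of the initial time, such that the inner conditional expectation is bounded by $\eta/2$ whenever $t-\tau_{j_1,j_2}\ge T_1$. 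Consequently, for every $t\ge s+T+T_1$, and uniformly in $\xi_1,\xi_2\in B$ and $j_1,j_2\in S$, the right-hand side is at most $\eta/4+\eta/4+\eta/2=\eta$. Taking the supremum over such $\varphi$ gives $d_L^{*}(P_{s,t}^{*}\delta_{\xi_1,j_1},P_{s,t}^{*}\delta_{\xi_2,j_2})\le\eta$, and letting $\eta\downarrow 0$ yields the forward limit.

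The main conceptual obstacle, as in Lemma \ref{Lcijp}, is securing uniformity: the threshold $T+T_1$ must depend only on $\eta$ and $B$, not on the fixed $s$ or on $\xi_1,\xi_2,j_1,j_2$. This is precisely why the $s$-independence clauses in $(A_0)$ and $(A_2)$, together with the time-homogeneity and finiteness of $S$, are essential; the remaining computations are routine reshufflings of the estimate \eqref{cij2}.
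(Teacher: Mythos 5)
Your proposal is correct and follows essentially the same route the paper intends: the paper proves this lemma simply by noting that one repeats the coupling scheme of Lemma \ref{Lcijp}, which is exactly what you do — coupling the chains at $\tau_{j_1,j_2}$, using $(A_0)$ to confine the states at the coupling time to a ball $B_R$, and then invoking the $s$-independent threshold in $(A_2)$ to control the remaining term for $t\ge s+T+T_1$. Your explicit attention to the uniformity of $T+T_1$ in $s$, $\xi_1,\xi_2$ and $j_1,j_2$ is the right point to emphasize and matches the paper's reliance on the $s$-independence built into $(A_0)$ and $(A_2)$.
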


\begin{lem}\label{Lcau}
Assume that the processes  $u(t,s,\xi,j)$ have properties $(A_1)$ and $(A_2)$. Then  for any  $t\in \R$ and $(\xi,j)\in H \times S$,
there exists a  $\mu_t\in \mathcal P(H\times S)$, independent of $(\xi,j)$, such that
\[
\mathop {\lim }\limits_{s \to  - \infty } d_L^* \left( {P_{s,t}^ *  \delta_{\xi,j} ,\mu _t } \right) = 0.
\]
\end{lem}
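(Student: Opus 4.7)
The strategy is to fix $(\xi,j)$, show that the curve $s\mapsto P_{s,t}^*\delta_{\xi,j}$ is Cauchy in the Dudley metric $d_L^*$ as $s\to-\infty$, extract its limit using the fact that $d_L^*$ is a complete metric on $\mathcal{P}(H\times S)$ (the bounded-Lipschitz metric is complete on Polish spaces, and $H\times S$ is Polish), and finally eliminate the dependence on $(\xi,j)$ by a second direct application of Lemma \ref{Lcijp}. The technical heart is to upgrade the already established uniform convergence on bounded sets of initial data (Lemma \ref{Lcijp}) to convergence for the single Dirac initial condition $\delta_{\xi,j}$; the bridge is property $(A_1)$.

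For the Cauchy property, fix $s_1<s_2<t$ and use the Markov property of the joint process $(u,r)$ to write
$$ P_{s_1,t}^*\delta_{\xi,j}=\int_{H\times S} P_{s_2,t}^*\delta_{x,k}\,\nu_{s_1,s_2}(dx,dk),\qquad \nu_{s_1,s_2}:=P_{s_1,s_2}^*\delta_{\xi,j}. $$
Given $\eta>0$, property $(A_1)$ produces a bounded set $B\subset H$, independent of $s_1,s_2$, with $\nu_{s_1,s_2}(B\times S)>1-\eta$. Applying Lemma \ref{Lcijp} to the bounded set $B\cup\{\xi\}$ yields some $T_0>0$ such that
$$ \sup_{(x,k)\in (B\cup\{\xi\})\times S} d_L^*\!\left(P_{s_2,t}^*\delta_{x,k},\,P_{s_2,t}^*\delta_{\xi,j}\right)<\eta \quad\text{for every } s_2<t-T_0. $$
Testing against $\varphi\in L_b(H\times S)$ with $\|\varphi\|_L\le 1$ (so $|\varphi|\le 1$) and splitting the integral with respect to $\nu_{s_1,s_2}$ over $B\times S$ and its complement gives
$$ \bigl|(\varphi,P_{s_1,t}^*\delta_{\xi,j})-(\varphi,P_{s_2,t}^*\delta_{\xi,j})\bigr|\le \eta\,\nu_{s_1,s_2}(B\times S)+2\eta<3\eta, $$
so that $d_L^*(P_{s_1,t}^*\delta_{\xi,j},P_{s_2,t}^*\delta_{\xi,j})\le 3\eta$ for all $s_1\le s_2<t-T_0$, which is precisely the Cauchy property.

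Completeness of $(\mathcal{P}(H\times S),d_L^*)$ then produces $\mu_t^{\xi,j}\in\mathcal{P}(H\times S)$ with $P_{s,t}^*\delta_{\xi,j}\to\mu_t^{\xi,j}$ as $s\to-\infty$. To see that this limit is independent of $(\xi,j)$, apply Lemma \ref{Lcijp} once more to the finite (hence bounded) set $\{\xi,\xi'\}$: it gives $d_L^*(P_{s,t}^*\delta_{\xi,j},P_{s,t}^*\delta_{\xi',j'})\to 0$ as $s\to-\infty$, so by the triangle inequality $\mu_t^{\xi,j}=\mu_t^{\xi',j'}=:\mu_t$.

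The main obstacle is cleanly combining the ``interior'' contraction estimate of Lemma \ref{Lcijp} with the ``tail'' control coming from $(A_1)$ into one Cauchy bound that is uniform in $s_1$; once the Chapman--Kolmogorov decomposition above is in place, the remaining manipulations are routine.
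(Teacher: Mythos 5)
Your proposal is correct and follows essentially the same route as the paper: the paper also establishes the Cauchy property by conditioning on $\mathcal F_{s}$ (your Chapman--Kolmogorov decomposition, written there with $s-h<s$ in place of $s_1<s_2$), splits the resulting integral over a ball $B_R$ and its complement, controls the tail with $(A_1)$ and the interior with Lemma \ref{Lcijp}, and then removes the dependence on $(\xi,j)$ by a second application of Lemma \ref{Lcijp} and the triangle inequality. Your explicit invocation of completeness of $d_L^*$ and the replacement of $B$ by $B\cup\{\xi\}$ are minor tidier touches, not a different argument.
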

\begin{proof}
Fix any $t\in \R$ and $(\xi, j)\in H\times S$. We first claim that
$\{P_{s,t}^ *  \delta_{\xi  ,j}:s\leq t\}$  is Cauchy in the space $\mathcal P(H \times S)$ with
metric $d_L^*$. To end this,
 we need to show that for any $\eta$, there is a $T>0$
such that
\begin{align}\label{cau0}
d_L^* \left( {P_{s - h,t}^ *  \delta_{\xi ,j} ,P_{s,t}^ *  \delta_{\xi ,j} } \right) <
\eta ,\quad \forall s < t - T,\quad h > 0.
 \end{align}
This is equivalent to
\[
\left| {\E\varphi \left( {u\left( {t,s - h,\xi,j} \right),r_{\left\{ {s-h,j} \right\}} \left( t \right)} \right)
- \E\varphi \left( {u\left( {t,s,\xi,j} \right),r_{\left\{ {s,j} \right\}} \left( t \right)} \right)} \right| < \eta,
\]
where $\varphi\in L_b(H\times S)$.
For any $\varphi\in L_b(H\times S)$ and $h>0$, compute
\begin{align}\label{cau1}
\begin{split}
& \left| {\E\varphi \left( {u\left( {t,s - h,\xi ,j} \right),r_{\left\{ {s - h,j} \right\}} \left( t \right)} \right) -
 \E\varphi \left( {u\left( {t,s,\xi ,j} \right),r_{\left\{ {s,i} \right\}} \left( t \right)} \right)} \right| \\
  &= \left| {\E\left( {\E\left( {\varphi \left( {u\left( {t,s - h,\xi ,j} \right),
  r_{\left\{ {s - h,j} \right\}} \left( t \right)} \right)|\mathcal F_s } \right)} \right) -
   \E\varphi \left( {u\left( {t,s,\xi ,j} \right),r_{\left\{ {s,j} \right\}} \left( t \right)} \right)} \right| \\
  &= \left| \sum\limits_{j \in S}{\int_{H} {\E\varphi \left( {u\left( {t,s,z,k} \right),r_{\left\{ {s,k} \right\}} \left( t \right)} \right)
  p\left( {s,s - h,\xi ,j,(dz, \{ k\}) } \right) - \E\varphi \left( {u\left( {t,s,\xi ,j} \right),r_{\left\{ {s,j} \right\}}
  \left( t \right)} \right)} } \right| \\
  &\le \sum\limits_{j \in S}\int_{H} {\left| {\E\varphi \left( {u\left( {t,s,z,k} \right),r_{\left\{ {s,k} \right\}} \left( t \right)} \right) -
  \E\varphi \left( {u\left( {t,s,\xi ,j} \right),r_{\left\{ {s,j} \right\}} \left( t \right)} \right)}
   \right|p\left( {s,s - h,\xi ,j,(dz,\{ k\}) } \right)}  \\
  &\le 2P\left( {s,s-h,\xi,j, B _R^C  \times S} \right) \\
  &\quad+ \sum\limits_{j \in S}\int_{B_R} {\left| {\E\varphi \left( {u\left( {t,s,z,k} \right),
  r_{\left\{ {s,k} \right\}} \left( t \right)} \right) -
   \E\varphi \left( {u\left( {t,s,\xi ,j} \right),r_{\left\{ {s,j} \right\}} \left( t \right)} \right)}
    \right|p\left( {s,s - h,\xi ,j,(dz, \{ k\}) } \right)},
 \end{split}
 \end{align}
where $B_R  = \left\{ {x \in H|\left\| x \right\|_H  \le R} \right\}$
and $B_R^C  =H-B_R$.
By $(A_1)$, there is
a positive number $R$ sufficiently large for
\begin{align}\label{cau2}
P\left( {s,s-h,\xi ,j, B _R^C  \times S} \right) < \frac{\eta }{4},\quad s<t.
 \end{align}
On the other hand, by Lemma \ref{Lcijp}, there is a $T>0$ such that
\begin{align}\label{cau3}
\mathop {\sup }\limits_{\varphi \in L_b(H\times S)} \left| {\E\varphi \left( {u\left( {t,s,z,k} \right),
r_{\left\{ {s,k} \right\}} \left( t \right)} \right)
- \E\varphi \left( {u\left( {t,s,\xi ,j} \right),r_{\left\{ {s,j} \right\}} \left( t \right)} \right)} \right|
< \frac{\eta }{2},\quad s < t - T,
 \end{align}
whenever $\left( {z,l} \right) \in B_R  \times S.$ Substituting \eqref{cau2} and \eqref{cau3} into \eqref{cau1} yields
\[
\left| {\E\varphi \left( {u\left( {t,s - h,\xi ,j} \right),r_{\left\{ {s - h,j} \right\}} \left( t \right)} \right)
- \E\varphi \left( {u\left( {t,s,\xi ,j} \right),r_{\left\{ {s,j} \right\}}
\left( t \right)} \right)} \right| < \eta,\quad \forall s < t - T,\quad h > 0.
\]
Since $\varphi$ is arbitrary, the desired inequality \eqref{cau0} must hold, i.e.,
$\{P_{s,t}^ *  \delta_{\xi  ,j}:s\leq t\}$  is Cauchy in the space $\mathcal P(H \times S)$ with
metric $d_L^*$. So, for any $t\in \R$ and $(\xi,j)\in H\times S$ there
is a unique $\mu_t(\xi,j)\in \mathcal P(H \times S)$ such that
\[
\mathop {\lim }\limits_{s \to  - \infty } d_L^* \left( {P_{s,t}^ *  \delta_{\xi,j} ,\mu _t(\xi,j) } \right) = 0.
\]
It remains to show that $\mu_t(\xi,j)$ is independent of $(\xi,j)$.
Now, for any $(\xi,j)\in H \times S$, by Lemma \ref{Lcijp},
\begin{align}\label{pcij}
\begin{split}
 &\mathop {\lim }\limits_{s \to  - \infty } d_L^* \left( {P_{s,t}^ *  \delta_{\xi ,j} ,\mu _t(0,1) } \right) \\
 &\quad \le \mathop {\lim }\limits_{s \to  - \infty } d_L^* \left( {P_{s,t}^ *  \delta_{\xi ,j} ,P_{s,t}^ *  \delta_{0,1} } \right)
  + \mathop {\lim }\limits_{s \to  - \infty } d_L^* \left( {P_{s,t}^ *  \delta_{0,1} ,\mu _t(0,1) } \right) = 0,
 \end{split}
 \end{align}
which implies that  $\mu_t$ is independent of $(\xi,j)$. This completes the proof of the lemma.
\end{proof}

\begin{thm}\label{Lev}
Assume that the processes  $u(t,s,\xi,j)$  have properties $(A_1)$ and $(A_2)$. Then, the family measures
 $\{\mu_t\}_{t\in \R}$ obtained above is an evolution system of measures on $H\times S$, i.e.,
\[
\sum\limits_{j \in S}\int_H {P_{s,t} \varphi \left( \xi,j \right)\mu _s \left( {d\xi,j} \right)}  =
\sum\limits_{j \in S}\int_H {\varphi \left( x,j \right)\mu _t \left( {d\xi,j} \right),\quad\forall
 \varphi  \in C_b \left( H\times S \right)} ,\quad s \leq t.
\]
\end{thm}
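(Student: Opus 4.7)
The plan is to derive the evolution identity by passing to the limit $\sigma \to -\infty$ in the Chapman--Kolmogorov relation $P_{\sigma,t}\varphi = P_{\sigma,s}(P_{s,t}\varphi)$ and then invoking Lemma \ref{Lcau} twice---once at time $t$, once at time $s$.

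Concretely, I would fix $s \le t$ (the case $s=t$ is trivial), a test function $\varphi \in C_b(H\times S)$, and an arbitrary reference point $(\xi_0,j_0)\in H\times S$. Since $P_{s,t}$ is Feller, the function $\psi := P_{s,t}\varphi$ also lies in $C_b(H\times S)$. For every $\sigma<s$, the Markov property of the process $y(\cdot,\sigma,\xi_0,j_0)$ yields the semigroup identity
\[
P_{\sigma,t}\varphi(\xi_0,j_0) \;=\; P_{\sigma,s}\bigl(P_{s,t}\varphi\bigr)(\xi_0,j_0),
\]
which, rewritten in dual form, becomes
\[
\sum_{k\in S}\int_H \varphi(x,k)\,(P_{\sigma,t}^*\delta_{\xi_0,j_0})(dx,k) \;=\; \sum_{k\in S}\int_H \psi(x,k)\,(P_{\sigma,s}^*\delta_{\xi_0,j_0})(dx,k).
\]

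I would then send $\sigma\to-\infty$. Lemma \ref{Lcau} supplies
\[
P_{\sigma,t}^*\delta_{\xi_0,j_0}\longrightarrow \mu_t,\qquad P_{\sigma,s}^*\delta_{\xi_0,j_0}\longrightarrow \mu_s
\]
in the metric $d_L^*$. Applied to the bounded continuous test functions $\varphi$ on the left and $\psi = P_{s,t}\varphi$ on the right, this immediately produces
\[
\sum_{k\in S}\int_H \varphi(x,k)\,\mu_t(dx,k) \;=\; \sum_{k\in S}\int_H (P_{s,t}\varphi)(x,k)\,\mu_s(dx,k),
\]
which is the evolution-system identity.

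The one technical point to watch is that $d_L^*$ is the dual-Lipschitz (Dudley) metric, so a priori it only governs integrals against bounded Lipschitz functions, whereas $\varphi$ and $\psi$ need only belong to $C_b$. Because $H\times S$ is Polish ($H$ is a separable Banach space and $S$ is finite), $d_L^*$ in fact metrizes weak convergence of probability measures, so convergence of integrals extends to every element of $C_b(H\times S)$. Apart from invoking this standard identification, the argument is purely formal---the substantive work has already been absorbed into Lemma \ref{Lcau}, where properties $(A_1)$ and $(A_2)$ were used to produce the candidate family $\{\mu_t\}_{t\in\R}$ in the first place.
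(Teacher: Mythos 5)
Your proposal is correct and follows essentially the same route as the paper's own proof: pass to the limit in the Chapman--Kolmogorov identity $P_{\sigma,t}\varphi = P_{\sigma,s}(P_{s,t}\varphi)$ using Lemma \ref{Lcau} at both times together with the Feller property. Your extra remark that $d_L^*$ metrizes weak convergence on the Polish space $H\times S$, so that the limit extends from Lipschitz to all bounded continuous test functions, is a point the paper leaves implicit but does not change the argument.
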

\begin{proof}
Let $s<\tau<t$ and $(\xi,j)\in H\times S$. We have from Lemma \ref{Lcau} for any $\varphi\in C_b \left( H\times S \right)$
\begin{align}\label{ev1}
\mathop {\lim }\limits_{s \to  - \infty } P_{s,\tau } \varphi \left( {\xi ,j} \right) = \left( {\varphi ,\mu _\tau  } \right).
\end{align}
Letting $s\rightarrow -\infty$ in the identity
\[
P_{s,\tau } P_{\tau ,t} \varphi \left( {\xi ,j} \right) = P_{s,t} \varphi \left( {\xi ,j} \right),
\]
 recalling Feller property and taking account \eqref{ev1} yields
\[
\left( {P_{\tau ,t} \varphi ,\mu_\tau  } \right) = \left( {\varphi ,\mu_t } \right).
\]
This completes the proof.
\end{proof}

\begin{rem} \label{rpc}
Lemma \ref{Lcau} and \ref{Lev} means that  the evolution
 system of measures $\{\mu_t\}_{t\in \R}$ obtained above is
pullback asymptotic stability in distribution.
\end{rem}

The following result gives information on the asymptotic behaviour of $P_{s,t}^ *  \delta_{\xi ,j}$ when
$t\rightarrow +\infty$.

\begin{thm}\label{Lfc}
Assume that the processes  $u(t,s,\xi,j)$  have properties $(A_1)$ and $(A_2)$ and the evolution system of  measures
 $\{\mu_t\}_{t\in \R}$ is obtained above.
For any $s \in \R$ and $(\xi,j)\in H\times S$, we have
\[
\mathop {\lim }\limits_{t \to  + \infty } d_{\rm{L}}^ * \left( {P_{s,t}^ *  \delta_{\xi ,j} ,\mu_t } \right) = 0.
\]
That is, the evolution system of measures $\{\mu_t\}_{t\in \R}$ is
forward  asymptotic stability in distribution.
\end{thm}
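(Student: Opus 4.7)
The plan is to leverage the evolution-system identity from Theorem \ref{Lev}, namely $(P_{s,t}\varphi,\mu_s)=(\varphi,\mu_t)$ for every $\varphi\in C_b(H\times S)$ and $s\le t$, and then reduce the desired convergence to Lemma \ref{Lcijf}. Fix $s\in\R$ and $(\xi,j)\in H\times S$. For any test function $\varphi\in L_b(H\times S)$ with $\|\varphi\|_L\le 1$, the identity lets us rewrite
\[
(\varphi,P_{s,t}^*\delta_{\xi,j})-(\varphi,\mu_t)=\sum_{k\in S}\int_H\bigl[P_{s,t}\varphi(\xi,j)-P_{s,t}\varphi(x,k)\bigr]\mu_s(dx,k),
\]
so that the absolute value of the integrand is bounded by $d_L^*(P_{s,t}^*\delta_{\xi,j},P_{s,t}^*\delta_{x,k})$. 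The strategy is therefore to show that this quantity is uniformly small as $t\to+\infty$ whenever $(x,k)$ ranges over a set of nearly full $\mu_s$-measure.

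Given $\eta>0$, since $H\times S$ is Polish ($H$ is a separable Banach space and $S$ is finite) and $\mu_s$ is a Borel probability measure, Ulam's theorem supplies a compact set $K\subset H$ with $\mu_s(K\times S)>1-\eta/4$; enlarging $K$ we may also arrange $\xi\in K$, so $K$ is bounded. Applying Lemma \ref{Lcijf} to this bounded set yields a time $T=T(\eta,K,s)$ with
\[
\sup_{x_1,x_2\in K,\,j_1,j_2\in S}d_L^*(P_{s,t}^*\delta_{x_1,j_1},P_{s,t}^*\delta_{x_2,j_2})<\tfrac{\eta}{2}\qquad\text{for all }t>T.
\]
Splitting the integral above over $K$ and its complement, and using the uniform bound on $K$ together with the elementary estimate $|(\varphi,\mu_1)-(\varphi,\mu_2)|\le 2\|\varphi\|_\infty\le 2$ on the complement, we conclude
\[
|(\varphi,P_{s,t}^*\delta_{\xi,j})-(\varphi,\mu_t)|\le \tfrac{\eta}{2}\cdot\mu_s(K\times S)+2\cdot\tfrac{\eta}{4}\le\eta
\]
uniformly in $\varphi$; taking the supremum over such $\varphi$ gives $d_L^*(P_{s,t}^*\delta_{\xi,j},\mu_t)\le\eta$ for all $t>T$, and since $\eta$ is arbitrary this is the claim.

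The only non-cosmetic input is the tightness of $\mu_s$, which here is automatic on a Polish space. One could alternatively avoid Ulam's theorem and deduce the same concentration of $\mu_s$ on a bounded ball directly from property $(A_1)$ via the Portmanteau theorem, applied to the approximating sequence $P_{\tilde s,s}^*\delta_{\xi_0,j_0}\to\mu_s$ produced by Lemma \ref{Lcau}. Once that reduction is in place, the entire quantitative content is carried by Lemma \ref{Lcijf}, which is precisely the forward-time analogue of Lemma \ref{Lcijp}; no genuinely new obstacle arises beyond bookkeeping the three $\eta$-pieces.
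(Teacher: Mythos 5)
Your proof is correct, but it follows a genuinely different route from the paper's. The paper introduces an auxiliary earlier time $s^*<s$ and splits $d_L^*(P_{s,t}^*\delta_{\xi,j},\mu_t)$ by the triangle inequality through $P_{s^*,t}^*\delta_{\xi,j}$: the term $d_L^*(P_{s^*,t}^*\delta_{\xi,j},\mu_t)$ is controlled by the pullback (Cauchy) estimate of Lemma \ref{Lcau}, while $d_L^*(P_{s,t}^*\delta_{\xi,j},P_{s^*,t}^*\delta_{\xi,j})$ is handled by disintegrating $P_{s^*,t}^*\delta_{\xi,j}$ over the transition kernel $p(s,s^*,\xi,j,\cdot)$ via the Markov property, using $(A_1)$ to confine that kernel to a ball $B_R$ up to probability $\eta/4$ and then applying Lemma \ref{Lcijf} on $B_R\times S$. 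You instead invoke the already-established evolution identity $(\varphi,\mu_t)=(P_{s,t}\varphi,\mu_s)$ from Theorem \ref{Lev} and disintegrate over $\mu_s$ itself, replacing the role of $(A_1)$ by Ulam-type tightness of $\mu_s$ on the Polish space $H\times S$; both arguments then place the entire quantitative burden on Lemma \ref{Lcijf}. Your decomposition is shorter and sidesteps a delicate point in the paper's argument, namely that the chosen $s^*$ must serve all large $t$ simultaneously, which requires the threshold $T$ in Lemma \ref{Lcau} to be uniform in $t$ (it is, via the Cauchy estimate, but this is left implicit); your version involves only the single fixed measure $\mu_s$. What the paper's route buys in exchange is independence from the conclusion of Theorem \ref{Lev} and from any tightness of $\mu_s$: it works directly from the hypotheses $(A_1)$--$(A_2)$, whereas your argument presupposes that $\{\mu_t\}$ has already been shown to be an evolution system of measures.
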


\begin{proof}
Fix any $s \in \R$ and $(\xi, j)\in H \times S$. To end the proof,
we need to show that for any $\eta>0$, there is a $T=T(\eta)>0$
such that
\[
d_L^ * \left( {P_{s,t}^ *  \delta_{\xi ,j} ,\mu_t } \right) < \eta,\quad \forall t>s+T.
\]
Notice that for $s_1<s<t$
\[
d_L^ * \left( {P_{s,t}^ *  \delta_{\xi ,j} ,\mu_t } \right)\leq
d_L^ * \left( {P_{s,t}^ *  \delta_{\xi ,j} ,{P_{s_1,t}^ *  \delta_{\xi ,j} }} \right)
+d_L^ * \left( {P_{s_1,t}^ *  \delta_{\xi ,j} ,\mu_t } \right).
\]
It follows from Lemma \ref{Lcau} that there is a $s^*=s^*(\eta)<s$
such that
\[
d_L^ * \left( {P_{s_1,t}^ *  \delta_{\xi ,j} ,\mu_t } \right) < \frac{\eta }{2},\quad \forall s_1\leq s^*.
\]
It remains to show that there is a $T=T(\eta)>0$
such that
\begin{align}\label{fc}
d_L^ * \left( {P_{s,t}^ *  \delta_{\xi ,j} ,P_{s^ *,t}^ *  \delta_{\xi ,j}} \right) < \frac{\eta}{2} ,\quad \forall t>s+T.
 \end{align}
This is equivalent to
\[
\left| {\E\varphi \left( {u\left( {t,s^ *,\xi,j} \right),r_{\left\{ {s^ *,j} \right\}} \left( t \right)} \right)
- \E\varphi \left( {u\left( {t,s,\xi,j} \right),r_{\left\{ {s,j} \right\}}
\left( t \right)} \right)} \right| < \frac{\eta}{2},
\]
for any $\varphi\in L_b(H\times S)$ and $  t>s+T$. Compute
\begin{align}\label{fc1}
\begin{split}
& \left| {\E\varphi \left( {u\left( {t,s^ *,\xi ,j} \right),r_{\left\{ {s^ *,j} \right\}} \left( t \right)} \right) -
 \E\varphi \left( {u\left( {t,s,\xi ,j} \right),r_{\left\{ {s,j} \right\}} \left( t \right)} \right)} \right| \\
  &= \left| {\E\left( {\E\left( {\varphi \left( {u\left( {t,s^ *,\xi ,j} \right),
  r_{\left\{ {s^ *,i} \right\}} \left( t \right)} \right)|\mathcal F_s } \right)} \right) -
   \E\varphi \left( {u\left( {t,s,\xi ,j} \right),r_{\left\{ {s,j} \right\}} \left( t \right)} \right)} \right| \\
  &= \left| {\sum\limits_{j \in S}\int_{H} {\E\varphi \left( {u\left( {t,s,z,k} \right),r_{\left\{ {s,k} \right\}} \left( t \right)} \right)
  p\left( {s,s^ *,\xi ,j,(dz , k) } \right) - \E\varphi \left( {u\left( {t,s,\xi ,j} \right),r_{\left\{ {s,j} \right\}}
  \left( t \right)} \right)} } \right| \\
  &\le \sum\limits_{j \in S}\int_{H} {\left| {\E\varphi \left( {u\left( {t,s,z,k} \right),r_{\left\{ {s,k} \right\}} \left( t \right)} \right) -
  \E\varphi \left( {u\left( {t,s,\xi ,j} \right),r_{\left\{ {s,j} \right\}} \left( t \right)} \right)}
   \right|p\left( {s,s^ *,\xi ,j,(dz , k) } \right)}  \\
  &\le 2P\left( {s,s^ *,x,j,B _R^C  \times S} \right) \\
  &\quad+\sum\limits_{j \in S} \int_{B_R } {\left| {\E\varphi \left( {u\left( {t,s,z,k} \right),
  r_{\left\{ {s,k} \right\}} \left( t \right)} \right) -
   \E\varphi \left( {u\left( {t,s,\xi ,j} \right),r_{\left\{ {s,j} \right\}} \left( t \right)} \right)}
    \right|p\left( {s,s^ *,\xi ,j,(dz , k) } \right)}.
 \end{split}
 \end{align}
By $(A_1)$, there is
a positive number $R$ sufficiently large for
\begin{align}\label{fc2}
P\left( {s,s^*,\xi ,j, B _R^C  \times S} \right) < \frac{\eta }{4}.
 \end{align}
On the other hand, by Lemma \ref{Lcijf}, there is a $T=T(\eta)>0$ such that
\begin{align}\label{fc3}
\mathop {\sup }\limits_{\varphi \in L_b(H\times S)} \left| {\E\varphi \left( {u\left( {t,s,z,k} \right),
r_{\left\{ {s,k} \right\}} \left( t \right)} \right)
- \E\varphi \left( {u\left( {t,s,\xi ,j} \right),r_{\left\{ {s,j} \right\}} \left( t \right)} \right)} \right|
< \frac{\eta}{4},\quad t>s+T,
 \end{align}
whenever $\left( {z,l} \right) \in B_R  \times S.$ Substituting \eqref{fc3} and \eqref{fc2} into \eqref{fc1} yields
\[
\left| {\E\varphi \left( {u\left( {t,s^*,\xi ,j} \right),r_{\left\{ {s - h,i} \right\}} \left( t \right)} \right)
- \E\varphi \left( {u\left( {t,s,\xi ,j} \right),r_{\left\{ {s,j} \right\}}
\left( t \right)} \right)} \right| < \frac{\eta}{2} ,\quad \forall t>s+T.
\]
Since $\varphi$ is arbitrary, the desired inequality \eqref{fc} must hold.
The proof is completed.
\end{proof}

\begin{thm}\label{Tper}
Assume that the processes  $u(t,s,\xi,j)$  have properties $(A_1)$ and $(A_2)$ and the  $y(t,s,\xi,j)$
are $\varpi$-periodic Markov processes. Then  for any   $(\xi,j)\in H \times S$,
there exists a unique $\varpi$-periodic evolution system of
measures $\{\mu_t\}_{t\in \R}$, independent of $(\xi,j)$, such that
\[
\mathop {\lim }\limits_{s \to  - \infty } d_L^* \left( {P_{s,t}^ *  \delta_{\xi,j} ,\mu _t } \right) = 0,\quad \forall t\in \R,
\]
and
\[
\mathop {\lim }\limits_{t \to   +\infty } d_L^* \left( {P_{s,t}^ *  \delta_{\xi,j} ,\mu _t } \right) = 0,\quad \forall s\in \R.
\]
\end{thm}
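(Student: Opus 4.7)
The plan is to combine the three earlier results (Lemma \ref{Lcau}, Theorem \ref{Lev}, Theorem \ref{Lfc}) to obtain existence, pullback stability and forward stability for free, and then add two new ingredients specific to the periodic setting: $\varpi$-periodicity of the family $\{\mu_t\}$ itself, and uniqueness of such a periodic family. Throughout, the only structural fact I will use about a $\varpi$-periodic Markov process is the identity
\[
P_{s+\varpi,\,t+\varpi}^{*}=P_{s,t}^{*},\qquad s\le t,
\]
together with the Feller property.

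First I would let $\{\mu_t\}_{t\in\R}$ be the family of measures produced by Lemma \ref{Lcau}, so that
\[
\mu_t=\lim_{s\to-\infty}P_{s,t}^{*}\delta_{\xi,j}\quad\text{in }d_L^{*},
\]
independently of $(\xi,j)\in H\times S$. Theorem \ref{Lev} then tells me $\{\mu_t\}$ is an evolution system of measures, and Theorem \ref{Lfc} gives the forward limit. The only new computation is periodicity: for any $t\in\R$ and $(\xi,j)\in H\times S$,
\[
\mu_{t+\varpi}
=\lim_{s\to-\infty}P_{s,\,t+\varpi}^{*}\delta_{\xi,j}
=\lim_{s\to-\infty}P_{s-\varpi,\,t}^{*}\delta_{\xi,j}
=\mu_t,
\]
where the middle equality uses the $\varpi$-periodicity of the Markov process, and the last equality uses Lemma \ref{Lcau} again, after the change of variable $s'=s-\varpi\to-\infty$.

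For uniqueness, suppose $\{\nu_t\}_{t\in\R}$ is another $\varpi$-periodic evolution system of measures. Fix $t\in\R$ and $\varphi\in L_b(H\times S)$ with $\|\varphi\|_L\le 1$. Using the evolution property followed by periodicity of $\nu$, I would write, for every $n\in\N$,
\[
(\varphi,\nu_t)=(P_{\,t-n\varpi,\,t}\,\varphi,\,\nu_{t-n\varpi})
=(P_{\,t-n\varpi,\,t}\,\varphi,\,\nu_{t}),
\]
so that
\[
(\varphi,\nu_t)=\sum_{j\in S}\int_{H}P_{\,t-n\varpi,\,t}\varphi(\xi,j)\,\nu_t(d\xi,j).
\]
By Lemma \ref{Lcau}, the integrand converges pointwise to the constant $(\varphi,\mu_t)$ as $n\to\infty$, and it is uniformly bounded by $\|\varphi\|_\infty\le 1$. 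Dominated convergence against the probability measure $\nu_t$ yields $(\varphi,\nu_t)=(\varphi,\mu_t)$; since $\varphi$ was arbitrary in a $d_L^{*}$-determining class, $\nu_t=\mu_t$.

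The only mildly delicate step is the uniqueness argument, because a priori I do not know that $\nu_t$ is supported in a bounded set where the convergence from Lemma \ref{Lcau} is uniform; however, this is precisely what the bounded convergence theorem circumvents, since I only need pointwise convergence together with the uniform bound $\|\varphi\|_\infty\le 1$. Everything else is a direct invocation of earlier results, so I expect the entire argument to be short, with the periodicity identity for $\mu_t$ and the dominated-convergence uniqueness argument as the two substantive steps.
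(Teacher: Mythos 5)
Your proposal is correct, and for existence, the two stability statements, and the periodicity of $\{\mu_t\}$ it is essentially identical to the paper's proof: the paper also just cites Lemma \ref{Lcau}, Theorem \ref{Lev} and Theorem \ref{Lfc}, and establishes $\mu_{t+\varpi}=\mu_t$ from the identity $P_{s,t+\varpi}^{*}\delta_{\xi,j}=P_{s-\varpi,t}^{*}\delta_{\xi,j}$ (phrased there via the subsequences $s_n=t-n\varpi$ rather than your change of variable, which is a cosmetic difference). The one place where you genuinely diverge is uniqueness. The paper compares two periodic systems $\{\mu_t\}$ and $\{\nu_t\}$ by writing $\left|(\varphi,\mu_t)-(\varphi,\nu_t)\right|$ as a double integral of $\left|\E\bigl(\varphi(u(t,s,\xi_1,j_1),\cdot)-\varphi(u(t,s,\xi_2,j_2),\cdot)\bigr)\right|$ against $\mu_s\otimes\nu_s$ with $s=-n\varpi$, and then invokes the coupling estimate of Lemma \ref{Lcijp} together with dominated convergence (periodicity being what keeps the product measure fixed as $n$ varies). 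You instead test $\nu_t$ directly against $P_{t-n\varpi,t}\varphi$, use periodicity of $\nu$ to replace $\nu_{t-n\varpi}$ by $\nu_t$, and apply bounded convergence using only the pointwise pullback convergence of Lemma \ref{Lcau}. Your route is slightly cleaner: it needs one integral instead of two, avoids Lemma \ref{Lcijp} entirely at this stage, and makes explicit the point the paper leaves implicit, namely that periodicity is what supplies a fixed reference measure for the dominated convergence argument. Both arguments prove exactly the same statement (uniqueness within the class of $\varpi$-periodic evolution systems), so nothing is gained or lost in generality.
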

\begin{proof}
It follows from Lemma \ref{Lev}, Remark \ref{rpc} and Lemma \ref{Lfc} that for any   $(\xi,j)\in H \times S$,
there exists an evolution system of measures  $\{\mu_t\}_{t\in \R}\subset \mathcal P(H\times S)$, independent of $(\xi,j)$, such that
\[
\mathop {\lim }\limits_{s \to  - \infty } d_L^* \left( {P_{s,t}^ *  \delta_{\xi,j} ,\mu _t } \right) = 0
\]
and
\[
\mathop {\lim }\limits_{t \to   +\infty } d_L^* \left( {P_{s,t}^ *  \delta_{\xi,j} ,\mu _t } \right) = 0.
\]
In the following we shall show that
$\mu_t$ is periodic. Take  subsequence $\{P_{s_n,t}^ *  \delta_{\xi  ,j}:s_n=t-n\varpi,\,n\in \N\}$  of $\{P_{s,t}^ *  \delta_{\xi  ,j}:s\leq t\}$
and   subsequence $\{P_{s_n,t+\varpi}^ *  \delta_{\xi  ,j}:s_n=t-(n-1)\varpi,\,n\in \N\}$
of $\{P_{s,t+\varpi}^ *  \delta_{\xi  ,j}:s\leq t\}$, respectively.
Since the processes $y(t,s,\xi,j)$
are $\varpi$-periodic, $P_{s_n,t+\varpi}^ *  \delta_{\xi  ,j}=P_{s_n-\varpi,t}^ *  \delta_{\xi  ,j}$.
This means the sequences  $\{P_{s_n,t}^ *  \delta_{\xi  ,j}:s_n=t-n\varpi,\,n\in \N\}$
and $\{P_{s_n,t+\varpi}^ *  \delta_{\xi  ,j}:s_n=t-(n-1)\varpi,\,n\in \N\}$
are same. Consequently, for any $t\in \R$, $\mu_t=
\mathop {\lim }\limits_{{\rm{s}} \to  - \infty }P_{s,t}^ *  \delta_{\xi  ,j}
=\mathop {\lim }\limits_{{\rm{s}} \to  - \infty }P_{s,t+\varpi}^ *  \delta_{\xi  ,j}=\mu_{t+\varpi}$.
It remains to prove uniqueness.
Assume $\{\mu_t\}_{t\in \R}$ and $\{\nu_t\}_{t\in \R}$ are the evolution systems of measures
of $y(t,s,\xi,j)$. By Fubini's theorem, we have for any $\varphi\in C_b(H\times S)$ and $s<t$
\begin{align}\label{mu1}
\begin{split}
&  \left| {\left( {\varphi , \mu _t } \right) - \left( {\varphi , \nu _t } \right)} \right|=
\left| {\left( {\varphi ,P_{s,t}^ *  \mu _s } \right) - \left( {\varphi ,P_{s,t}^ *  \nu _s } \right)} \right|
= \left| {\left( {P_{s,t} \varphi ,\mu _s } \right) - \left( {P_{s,t} \varphi ,\nu _s } \right)} \right| \\
 & = \left| {\sum\limits_{j_1  \in S} {\int_H {\E\left( {\varphi \left( {t,s,\xi _1 ,j_1 } \right)} \right)
 \mu _s \left( {d\xi _1 ,j_1 } \right) - \sum\limits_{j_2  \in S} {\int_H {\E\left( {\varphi \left( {t,s,\xi _2 ,j_2 }
  \right)} \right)\nu _s \left( {d\xi _2 ,j_2 } \right)} } } } } \right| \\
&  \le \sum\limits_{j_1 ,j_2  \in S} {\int_H {\int_H {\left| {\E\left( {\varphi \left( {t,s,\xi _1 ,j_1 } \right)
- \varphi \left( {t,s,\xi _2 ,j_2 } \right)} \right)} \right|\mu _s
\left( {d\xi _1 ,j_1 } \right)\nu _s \left( {d\xi _2 ,j_2 } \right)} } },
 \end{split}
 \end{align}
which together with Lemma \ref{Lcijp} and the Lebesgue dominated convergence theorem
implies that
\begin{align*}\label{mu2}
\begin{split}
 &\left| {\left( {\varphi ,\mu _t } \right) - \left( {\varphi ,\nu _t } \right)} \right| \le  \\
& \mathop {\lim }\limits_{n \to   \infty } \sum\limits_{j_1 ,j_2  \in S}
 {\int_H {\int_H {\left| {\E\left( {\varphi \left( {t,-n \varpi,\xi _1 ,j_1 } \right) -
 \varphi \left( {t,-n \varpi,\xi _2 ,j_2 } \right)} \right)} \right|\mu _{-n \varpi} \left( {d\xi _1 ,j_1 }
 \right)\nu _{-n \varpi} \left( {d\xi _2 ,j_2 } \right)} } }  = 0.
 \end{split}
 \end{align*}
Then we have for any $\varphi\in C_b(H\times S)$
\[
\left( {\varphi ,\mu _t } \right) = \left( {\varphi ,\nu _t } \right).
\]
The proof is completed.
\end{proof}

\begin{rem}
When we replace $(A_1)$-$(A_2)$  with Condition $(A_1^*)$-$(A_2^*)$,
by minor modifying to the process of proof,
the   all results above are correct.

\end{rem}

\section{Limits    of  evolution system of measures}
\setcounter{equation}{0}

Suppose for every $\varepsilon\in[0,1]$, $s,t\in \R$ and $s\leq t$, $u^\varepsilon(t,s,\xi,j)$ be a  stochastic
process with initial conditions $u^\varepsilon(s,s,\xi,j) = \xi \in H$ and
$r(s) = j\in S$ at initial time $t=s$.
Let $y^\varepsilon(t,s,\xi,j)$ denote the $(H \times S)$-valued
process $(u^\varepsilon(t,s,\xi,j), r_{s,j}(t))$. $y^\varepsilon(t,s,\xi,j)$ are time
nonhomogeneous  Markov process and its probability transition operators
are Feller.

We assume that

$(A_3)$ For every
  compact set $K\subset H$,  $\varepsilon_0 \in [0,1]$
   and $\eta>0$,
\begin{equation}\label{haa}
\mathop {\lim }\limits_{\varepsilon  \to  \varepsilon_0}
 \mathop {\sup }\limits_{(x,j) \in K\times S}
P\left (
{\| {y^\varepsilon  \left( {t, s, \xi,j} \right)-
y^ {\varepsilon_0} \left( {t,s,\xi,j} \right)} \|_H \ge \eta }
\right ) = 0.
\end{equation}

\begin{thm}\label{Tmc}
Assume $(A_3)$ holds
and $\varepsilon_n \to \varepsilon_0\in [0,1]$.
Let $\{\mu_t\}_{t\in \R}$ be a family of   probability
measures  on $H\times S$  and let
 $\{\mu_t^{\varepsilon_n}\}_{t\in \R}$ be an evolution system of measures
of  $y^{\varepsilon_n}(t,s,\xi,j)$. If   for any $t\in \R$
  $\mu^{\varepsilon_n}_t\rightarrow \mu_t$ weakly, as $n\rightarrow \infty$,
then $\{\mu_t\}_{t\in \R}$ must be  an evolution system of measures    of $y^{ \varepsilon_0}(t,s,\xi,j)$.
\end{thm}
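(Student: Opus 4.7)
The plan is to pass to the limit $n\to\infty$ in the evolution-system identity
\[
(P_{s,t}^{\varepsilon_n}\varphi,\mu_s^{\varepsilon_n}) = (\varphi,\mu_t^{\varepsilon_n}),\qquad s\le t,
\]
which holds by hypothesis for every $n$ and every $\varphi\in C_b(H\times S)$, and then read off the analogous identity with $\varepsilon_0$, $\mu_s$, $\mu_t$ in place of $\varepsilon_n$, $\mu_s^{\varepsilon_n}$, $\mu_t^{\varepsilon_n}$. Since bounded Lipschitz functions form a measure-determining class on the Polish space $H\times S$, it is enough to produce the limit identity only for $\varphi\in L_b(H\times S)$; the Feller property of $P_{s,t}^{\varepsilon_0}$ then extends the identity to all $\varphi\in C_b(H\times S)$. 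Fix such a $\varphi$, which we may assume satisfies $\|\varphi\|_L\le 1$.

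The right-hand side is easy: $(\varphi,\mu_t^{\varepsilon_n})\to(\varphi,\mu_t)$ by weak convergence and $\varphi\in C_b$. For the left-hand side I would decompose
\[
(P_{s,t}^{\varepsilon_n}\varphi,\mu_s^{\varepsilon_n}) - (P_{s,t}^{\varepsilon_0}\varphi,\mu_s) = (P_{s,t}^{\varepsilon_n}\varphi - P_{s,t}^{\varepsilon_0}\varphi,\mu_s^{\varepsilon_n}) + (P_{s,t}^{\varepsilon_0}\varphi,\mu_s^{\varepsilon_n}-\mu_s).
\]
The second summand vanishes as $n\to\infty$ because the Feller property places $P_{s,t}^{\varepsilon_0}\varphi$ in $C_b(H\times S)$ and $\mu_s^{\varepsilon_n}\to\mu_s$ weakly. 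The first summand is the heart of the matter. Since $\mu_s^{\varepsilon_n}\to\mu_s$ weakly on a Polish space, Prokhorov's theorem gives tightness: for each $\eta>0$ there exists a compact $K\subset H$ with $\sup_n\mu_s^{\varepsilon_n}((H\setminus K)\times S)<\eta$ (using that $S$ is finite). On $K\times S$, hypothesis $(A_3)$ provides uniform-in-$(\xi,j)$ convergence in probability of $y^{\varepsilon_n}(t,s,\xi,j)$ to $y^{\varepsilon_0}(t,s,\xi,j)$, so for every $\delta>0$ the split
\[
|P_{s,t}^{\varepsilon_n}\varphi(\xi,j)-P_{s,t}^{\varepsilon_0}\varphi(\xi,j)|\le \delta\,\text{Lip}(\varphi)+2\|\varphi\|_\infty P\bigl(\|y^{\varepsilon_n}-y^{\varepsilon_0}\|_H\ge\delta\bigr)
\]
(where the $S$-component drops out because the Markov chain $r_{s,j}$ does not depend on $\varepsilon$) yields uniform convergence $P_{s,t}^{\varepsilon_n}\varphi\to P_{s,t}^{\varepsilon_0}\varphi$ on $K\times S$. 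Splitting the first integral between $K\times S$ and its complement, the part over $K\times S$ vanishes in the limit while the remainder is bounded by $2\|\varphi\|_\infty\eta$; letting $\eta\to 0$ gives the desired convergence.

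Combining the two terms yields $(P_{s,t}^{\varepsilon_0}\varphi,\mu_s)=(\varphi,\mu_t)$ for every $\varphi\in L_b(H\times S)$, so $P_{s,t}^{\varepsilon_0,\ast}\mu_s=\mu_t$ as measures, and the evolution-system identity holds on all of $C_b(H\times S)$. The main obstacle is precisely the estimate of the first summand: both the integrand $P_{s,t}^{\varepsilon_n}\varphi$ and the integrator $\mu_s^{\varepsilon_n}$ move with $n$, so neither weak convergence of measures nor pointwise convergence of $P_{s,t}^{\varepsilon_n}\varphi$ alone would close the argument, and the crucial gain is that Prokhorov tightness localises the integrator onto a compact set where $(A_3)$ upgrades the pointwise convergence of the transition operators to uniform convergence.
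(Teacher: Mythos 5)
Your proposal is correct and follows essentially the same route as the paper: the key term $(P_{s,t}^{\varepsilon_n}\varphi - P_{s,t}^{\varepsilon_0}\varphi,\mu_s^{\varepsilon_n})$ is exactly what the paper estimates in \eqref{hcc}--\eqref{hdd}, via the same combination of tightness of $\{\mu_s^{\varepsilon_n}\}$ to localise onto a compact set, the Lipschitz/sup-norm split of $\varphi$ against the event $\{\|u^{\varepsilon_n}-u^{\varepsilon_0}\|_H\ge\eta\}$, and $(A_3)$ to kill the bad event uniformly on $K\times S$. The only cosmetic difference is that you make explicit (via the second summand and the Feller property) the final passage $\mu_s^{\varepsilon_n}\to\mu_s$, which the paper handles tersely in its concluding sentence.
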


\begin{proof}
We only need to verify
  that for every    $\varphi\in L_b(H\times S)$ and $s<t$,
\be\label{haa_a1}
\sum\limits_{j \in S}\int_{H} {\E \varphi\left( {u^{\varepsilon_0}  \left( {t,s,\xi,j} \right),r_{s,j}(t)} \right)} \mu_s
\left( {d\xi,j} \right) = \sum\limits_{j \in S}\int_{H} {\varphi\left( \xi,j\right)\mu_t \left( {d\xi,j} \right)}.
\ee
Since for $t\in \R$, $\{\mu^{\varepsilon_n}_t\}$
 is tight, we see that for every $\epsilon>0$, there exists a
compact set $K=K(\epsilon,t) \subset H$ such that
\be\label{hbb}
 \mu ^{\varepsilon _n }_t
\left( K\times S \right) \ge 1 - \epsilon \quad \text{for all } \ n \in \N.
\ee
By \eqref{hbb}  we obtain
\begin{align}\label{hcc}
\begin{split}
& \left| {\sum\limits_{j \in S}\int_{H} {\E \varphi\left( {u^ {\varepsilon_0} \left( {t,s,\xi,j} \right),r_i(t)}
\right)} \mu^{\varepsilon _n}_s \left( {dx,i} \right) -
 \sum\limits_{j \in S}\int _{H}{\varphi\left( \xi,j \right)\mu ^{\varepsilon _n }_t \left( {d\xi,j} \right)} } \right| \\
  &= \left| {\sum\limits_{j \in S}\int_{H} {\E \varphi\left( {u^ {\varepsilon_0} \left( {t,s,\xi,j} \right),r_{s,j}(t)}
   \right)} \mu ^{\varepsilon _n }_s
  \left( {d\xi,j} \right) - \sum\limits_{j \in S}\int_{H} {\E \varphi\left( {u^{\varepsilon _n }
  \left( {t,s,\xi,j} \right),r_{s,j}(t)}
   \right)\mu ^{\varepsilon _n }_s \left( {d\xi,j} \right)} } \right| \\
 & \le \sum\limits_{j \in S}\int_{H} {\E\left| {\varphi\left( {u^ {\varepsilon_0} \left( {t,s,\xi,j} \right),r_{s,j}(t)} \right)
 - \varphi\left( {u^{\varepsilon _n } \left( {t,s,\xi,j} \right),r_i(t)} \right)} \right|}
 \mu ^{\varepsilon _n }_s \left( {d\xi,j} \right)\\
 & \le \sum\limits_{j \in S}\int_{K} {\E\left| { \varphi\left( {u^ {\varepsilon_0} \left( {t,s,\xi,j} \right),r_{s,j}(t)} \right)
 - \varphi\left( {u^{\varepsilon _n } \left( {t,s,\xi,j} \right),r_{s,j}(t)} \right)} \right|}
  \mu ^{\varepsilon _n }_s \left( {d\xi,j} \right)\\
  &\quad + 2\epsilon \sup_{(\xi,j)\in H\times S} |\varphi(\xi,j)|.
  \end{split}
 \end{align}
 Since $\varphi\in L_b(H\times S)$, for every  $\epsilon>0$, there exists   $\eta>0$ such that
   $|\varphi(x,j)-\varphi(z,j)|<\epsilon$
   for all $x,z\in H$ with  $\|x-z\|_H<\eta$ and $j\in S$.
Thus we get
\begin{align}\label{hdd}
\begin{split}
 &\sum\limits_{j \in S}\int_{K} {\E\left| {\varphi\left( {u^ {\varepsilon_0} \left( {t,s,\xi,j} \right),r_{s,j}(t)} \right)
 - \varphi\left( {u^{\varepsilon _n } \left( {t,s,\xi,j} \right),r_{s,j}(t)} \right)} \right|}
 \mu ^{\varepsilon _n }_s \left( {d\xi,j} \right)\\
  &= \sum\limits_{j \in S}\int_{K}
  \left (
  \int_Y
  \left|
   \varphi\left( {u^ {\varepsilon_0} \left( {t,s,\xi,j} \right),r_{s,j}(t)} \right) -
  \varphi\left( {u^{\varepsilon _n } \left( {t,s,\xi,j} \right ) ,r_{s,j}(t) }
  \right )  \right |   P(d\omega)
  \right )
   \mu ^{\varepsilon _n }_s \left( {d\xi,j} \right) \\
  &\quad+
   \sum\limits_{j \in S}\int_{K}
  \left (
  \int_{Y^C }
  \left|
   \varphi\left( {u^ {\varepsilon_0} \left( {t,s,\xi,j} \right),r_{s,j}(t)} \right) -
  \varphi\left( {u^{\varepsilon _n } \left( {t,s,\xi,j} \right ),r_{s,j}(t)  }
  \right )  \right |   P(d\omega)
  \right )
   \mu ^{\varepsilon _n }_s \left( {d\xi,j} \right) \\
  &\le 2\sup_{(\xi,j)\in H\times S} |\varphi(\xi,j)|
   \mathop {\sup }\limits_{(\xi,j) \in {K\times S}} P\left( {\| {u^{\varepsilon _n }
  \left( {t,s,\xi,j} \right)-u^ {\varepsilon_0} \left( {t,s,\xi,j} \right)} \|_H \ge \eta } \right) + \epsilon,
    \end{split}
 \end{align}
 where $Y=\left\{\omega\in \Omega|
  \| u^{\varepsilon _n }
  \left( {t,s,\xi,j} \right)-u^ {\varepsilon_0} \left( {t,s,\xi,j} \right)  \|_H
  \ge \eta \right \} $.

 It follows from $(A_3)$  and  \eqref{hcc}-\eqref{hdd} that
\begin{align}\label{hee}
\begin{split}
& \mathop {\lim   }\limits_{n \to \infty}
 \left| {\sum\limits_{j \in S}\int_{H} {\E \varphi\left( {u^{\varepsilon _0 }  \left( {t,s,\xi,j}
 \right),r_{s,j}(t)} \right)} \mu^{\varepsilon _n }_s \left( {d\xi,j} \right) -
 \sum\limits_{j \in S}\int_{H} {\varphi\left( \xi,j \right)\mu ^{\varepsilon _n }_t \left( {d\xi,j} \right)} } \right|\\
 &\quad \le   \epsilon  + 2\epsilon   \sup_{(\xi,j)\in {H\times S}} |\varphi(\xi,j)|.
  \end{split}
 \end{align}
 Since $\epsilon>0$ is arbitrary and $\mu^{\varepsilon_n}_t\rightarrow \mu_t$ weakly,
 by \eqref{hee} we
 obtain \eqref{haa_a1} immediately, which  shows
 that $\{\mu\}_{t\in \R}$ is an evolution system of   measures of the
process $y^{ \varepsilon_0}(t,s,\xi,i)$.
 \end{proof}

If for every $\varepsilon\in[0,1]$, $u^\varepsilon(t,s,\xi,j)$ have property $(A_2)$, we say
$u^\varepsilon(t,s,\xi,j)$ have property $(A_2)$.
Moreover,  we also assume

$(A_4)$  For any $\varepsilon\in [0,1]$, $s\in \R$,  $\xi\in H$ and $\eta>0$, there exists a
compact  $K=(\eta, \xi)\subset H$, independent of $\varepsilon$ and $s$, such that
for any  $j\in S$,
\[
P\left\{ {{u^\varepsilon\left( {t,s,\xi ,j} \right)}\in K,\quad t>s} \right\} <1- \eta.
\]
\begin{rem}
$(A_4)$  is stronger than $(A_1)$.

\end{rem}

Given  $\varepsilon  \in [0,1]$, for $A\subset \mathcal B(H)$ and $B\subset S$,
let $P^\varepsilon(t, s, \xi,  j, A \times B)$
denote the probability of event $\{y^\varepsilon(t,s,\xi,j)\in A \times B\}$
given initial condition $y^\varepsilon(s,s,\xi,j) = (\xi, j)$ at time $t=s$.
Denote by $(\mu^\varepsilon_t)_{t\in \R}$
the evolution system of measures of  $y^\varepsilon(t,s,\xi,j)$ obtained in the section above.
For each $\varepsilon  \in [0,1]$,  the definitions of operators $P^\varepsilon_{s,t}$ and $P^{\varepsilon,*}_{s,t}$
 with respect to $y^\varepsilon(t,s,\xi,j)$ are the same  as that of  $P_{s,t}$ and $P^{*}_{s,t}$ in Section 2.

\begin{thm}\label{cov1}
Suppose $(A_2)$-$(A_4)$  hold.
Then:

(i) For every $t\in \R$, the union
$\bigcup\limits_{\varepsilon \in [0,1]}
  \mu_t^\varepsilon $ is tight.

(ii)   If $\varepsilon_n \to \varepsilon_0 \in [0,1]$,
   then there exists a
subsequence $\varepsilon_{n_k}$ and a evolution system of measures
$\{\mu_t^{\varepsilon_0}\}_{t\in \R}$ of $y^{\varepsilon_0}(t,s,\xi,j)$
such that
$ \mu^{\varepsilon_{n_k}}_t \rightarrow \mu^{\varepsilon_0}_t$ weakly.
\end{thm}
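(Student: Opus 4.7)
My plan is to prove the two assertions in turn, reducing each to tools already established in the paper.

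For (i), the idea is that $(A_4)$ already produces a compact set working uniformly in $\varepsilon$, so tightness of $\{\mu_t^\varepsilon\}_{\varepsilon\in[0,1]}$ follows from Portmanteau. Concretely, fix $t\in\R$ and any base point $(\xi,j_0)\in H\times S$. Given $\eta>0$, $(A_4)$ supplies a compact $K\subset H$, independent of $\varepsilon\in[0,1]$ and $s<t$, with
\[
P_{s,t}^{\varepsilon,*}\delta_{\xi,j_0}(K\times S)=P\{u^{\varepsilon}(t,s,\xi,j_0)\in K\}\ge 1-\eta.
\]
Since $S$ is finite, $K\times S$ is compact, hence closed in $H\times S$. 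Because $(A_4)$ is stronger than $(A_1)$ and $(A_2)$ holds, Lemma \ref{Lcau} gives $P_{s,t}^{\varepsilon,*}\delta_{\xi,j_0}\to\mu^{\varepsilon}_t$ weakly as $s\to-\infty$, so Portmanteau yields $\mu^{\varepsilon}_t(K\times S)\ge 1-\eta$ for every $\varepsilon\in[0,1]$, proving (i).

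For (ii), I would combine (i) with a Cantor diagonal extraction and a uniqueness argument imported from the proof of Theorem \ref{Tmc}. First enumerate a countable dense set $\{t_j\}_{j\in\N}\subset\R$, say $\mathbb Q$; by (i) and Prokhorov's theorem, a diagonal extraction produces a subsequence $\{\varepsilon_{n_k}\}$ such that $\mu^{\varepsilon_{n_k}}_{t_j}\to\nu_{t_j}$ weakly for every $j$. To promote this to every real $t$, fix $t\in\R$, extract a further sub-subsequence $\{\varepsilon_{n_{k_l}}\}$ with $\mu^{\varepsilon_{n_{k_l}}}_t\to\tilde\nu_t$ weakly (again by (i)), and pick any $t_j\in\mathbb Q$ with $t_j<t$. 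The evolution identity gives
\[
\int_{H\times S}\varphi\,d\mu^{\varepsilon_{n_{k_l}}}_t=\int_{H\times S} P^{\varepsilon_{n_{k_l}}}_{t_j,t}\varphi\,d\mu^{\varepsilon_{n_{k_l}}}_{t_j},\quad \varphi\in C_b(H\times S),
\]
and replaying the argument from Theorem \ref{Tmc} on the right-hand side — splitting $P^{\varepsilon_{n_{k_l}}}_{t_j,t}\varphi-P^{\varepsilon_0}_{t_j,t}\varphi$ against a compact set coming from the tightness of $\{\mu^{\varepsilon_{n_k}}_{t_j}\}_k$, invoking $(A_3)$ on that compact set, and using $\mu^{\varepsilon_{n_{k_l}}}_{t_j}\to\nu_{t_j}$ against the Feller test function $P^{\varepsilon_0}_{t_j,t}\varphi\in C_b(H\times S)$ — identifies the limit as $\int\varphi\,d\tilde\nu_t=\int P^{\varepsilon_0}_{t_j,t}\varphi\,d\nu_{t_j}$. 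The right-hand side does not depend on the chosen sub-subsequence, so every weak cluster point of $\{\mu^{\varepsilon_{n_k}}_t\}_k$ coincides with the same measure $\mu^{\varepsilon_0}_t$, forcing $\mu^{\varepsilon_{n_k}}_t\to\mu^{\varepsilon_0}_t$ weakly for every $t\in\R$.

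Once weak convergence is established along the common subsequence at every $t\in\R$, Theorem \ref{Tmc} directly concludes that $\{\mu^{\varepsilon_0}_t\}_{t\in\R}$ is an evolution system of measures of $y^{\varepsilon_0}(t,s,\xi,j)$, finishing the proof. I expect the main obstacle to be exactly the uniqueness step at irrational $t$: transferring weak convergence from a countable dense set of times to every $t\in\R$ along a single subsequence requires precisely the $(A_3)$-based continuous-dependence argument that underlies Theorem \ref{Tmc}, and care is needed to check that the cluster-point measure obtained in this way does not depend on which sub-subsequence was extracted. Once that is in place, the tightness via Portmanteau and the concluding appeal to Theorem \ref{Tmc} are essentially routine.
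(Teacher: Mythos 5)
Your proposal is correct and follows the same overall route as the paper: part (i) comes from $(A_4)$ together with the identification of $\mu_t^\varepsilon$ as the pullback limit of $P_{s,t}^{\varepsilon,*}\delta_{\xi,j}$, and part (ii) comes from Prokhorov's theorem plus Theorem \ref{Tmc}. The paper's own proof is extremely terse: for (i) it simply cites $(A_4)$ and the relation $P_{s,t}^{\varepsilon,*}\delta_{\xi,j}(\Gamma\times S)=P^\varepsilon(t,s,\xi,j,\Gamma\times S)$ and declares the tightness ``easy to verify'' (your Portmanteau argument, using Lemma \ref{Lcau} and the closedness of $K\times S$, is exactly the verification it omits); for (ii) it extracts, for the fixed $t$, a weakly convergent subsequence and immediately invokes Theorem \ref{Tmc}. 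Your treatment of (ii) is more careful on a point the paper glosses over: Theorem \ref{Tmc} requires weak convergence $\mu_t^{\varepsilon_{n_k}}\to\mu_t^{\varepsilon_0}$ along a \emph{single} subsequence for \emph{every} $t\in\R$, whereas a naive Prokhorov extraction only works at one $t$ (or, by diagonalization, at countably many). Your diagonal extraction over a dense set of times, followed by the identification of every cluster point at an arbitrary $t$ via the evolution identity, the Feller property and the $(A_3)$-based estimate from Theorem \ref{Tmc}, is precisely the missing step needed to make the paper's appeal to Theorem \ref{Tmc} legitimate. So the approach is the same, but your version supplies a genuine refinement rather than merely expanding the exposition.
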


\begin{proof}
$(i)$.
By  $(A_4)$ and the relationship $P_{s,t}^ {\varepsilon,*}  \delta _{\xi ,j} \left( {\Gamma  \times S} \right)
= P^\varepsilon\left( {t,s,\xi ,j,\left( {\Gamma  \times S} \right)} \right)$, for any $\Gamma\in \mathcal B(H)$,
it is easy to verify that
 the set $\bigcup\limits_{\varepsilon \in [0,1]}
  \mu^{\varepsilon }_t $ is tight.

$(ii)$.   By $(i)$ we know that $\{\mu^{\varepsilon_n}_t\}$, $t\in \R$, is tight,
and hence there exists a subsequence $\varepsilon_{n_k}$
and a probability measure $\mu^{*}_t$ such that
 $\mu^{\varepsilon_{n_k}}_t\rightarrow \mu^{*}_t $
weakly.
 It follows from Theorem \ref{Tmc} and
$(A_3)$ that $\{\mu_t^{\varepsilon_0}\}_{t\in \R}$
 is   a evolution system of measures
 of $y^{\varepsilon_0}(t,s,\xi,j)$.
This completes the proof.
\end{proof}

As an immediate consequence of Theorem \ref{cov1}, we have the following convergence result.

\begin{thm}\label{cov2}
Suppose $(A_2)$-$(A_4)$  hold.  Let $ \varepsilon_n,\varepsilon_0\in[0, 1]$
for all $n\in \N$ such that $\varepsilon_n\rightarrow \varepsilon_0$.
If $\{\mu_t^{\varepsilon_n}\}_{t\in \R}$ and $\{\mu^{\varepsilon_0}_t\}_{t\in \R}$  are the unique $\varpi$-periodic
evolution systems of measures of $\varpi$-periodic Markov processes  $y^{\varepsilon_n}(t,s,\xi,j)$
and   $y^{\varepsilon_0}(t,s,\xi,j)$, respectively, then  for each $t\in \R$,
$ \mu^{\varepsilon_{n }}_t \rightarrow \mu^{\varepsilon_0}$ weakly.
\end{thm}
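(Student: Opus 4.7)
The plan is to combine tightness of $\{\mu_t^{\varepsilon_n}\}_n$ with the uniqueness of the $\varpi$-periodic evolution system of measures for $y^{\varepsilon_0}$ via the Urysohn subsequence principle. Fix $t\in\R$. To prove $\mu_t^{\varepsilon_n}\to\mu_t^{\varepsilon_0}$ weakly, it suffices to show that every subsequence $\{\varepsilon_{n_k}\}$ admits a further subsequence $\{\varepsilon_{n_{k_j}}\}$ along which $\mu_t^{\varepsilon_{n_{k_j}}}$ converges weakly to $\mu_t^{\varepsilon_0}$.

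Given such a subsequence, Theorem \ref{cov1}(i) guarantees that the family $\{\mu_\tau^{\varepsilon_{n_k}}\}_k$ is tight for every $\tau\in\R$. I would choose a countable set $\widetilde D\subset\R$ that is $\varpi$-translation invariant (for instance $(\mathbb Q\cap[0,\varpi))+\varpi\mathbb Z$) and contains $t$, and then apply Cantor's diagonal argument to extract one subsequence $\{\varepsilon_{n_{k_j}}\}$ along which $\mu_\tau^{\varepsilon_{n_{k_j}}}\to\widetilde\mu_\tau$ weakly for every $\tau\in\widetilde D$, each $\widetilde\mu_\tau$ being a probability measure on $H\times S$.

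The next step is to identify $\widetilde\mu_t$ with $\mu_t^{\varepsilon_0}$. Passing to the limit in the periodicity identity $\mu_\tau^{\varepsilon_{n_{k_j}}}=\mu_{\tau+\varpi}^{\varepsilon_{n_{k_j}}}$ yields $\widetilde\mu_\tau=\widetilde\mu_{\tau+\varpi}$ on $\widetilde D$. I would then extend $\widetilde\mu$ to all $\tau\in\R$ by setting $\widetilde\mu_\tau:=P_{s,\tau}^{\varepsilon_0,\ast}\widetilde\mu_s$ for any $s\in\widetilde D$ with $s<\tau$; the Feller property of $P_{s,\tau}^{\varepsilon_0}$, together with Theorem \ref{Tmc} applied along the subsequence at times in $\widetilde D$, ensures that the extension is independent of the choice of $s$ and that $\{\widetilde\mu_\tau\}_{\tau\in\R}$ is a $\varpi$-periodic evolution system of measures for $y^{\varepsilon_0}$. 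The assumed uniqueness then forces $\widetilde\mu_\tau=\mu_\tau^{\varepsilon_0}$ for every $\tau$; in particular $\mu_t^{\varepsilon_{n_{k_j}}}\to\mu_t^{\varepsilon_0}$, which closes the Urysohn argument.

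The main obstacle is bridging the gap between convergence along a countable dense set—all that Cantor diagonalization delivers—and the hypothesis ``for every $t\in\R$'' required by Theorem \ref{Tmc}. The resolution sketched above uses the dual evolution operator $P_{s,\tau}^{\varepsilon_0,\ast}$ to propagate the weak limit from times in $\widetilde D$ to arbitrary times, with consistency secured by the Feller property. All remaining ingredients—passage to the limit in the periodicity identity, identification via Theorem \ref{Tmc}, and the final appeal to uniqueness—follow routinely from hypotheses $(A_2)$--$(A_4)$ and the results of Sections 2 and 3.
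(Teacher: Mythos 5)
Your argument is correct and follows the same overall strategy as the paper's proof, which is a two-line appeal to Theorem \ref{Tper} (uniqueness of the $\varpi$-periodic evolution system of measures) and Theorem \ref{cov1} (tightness and subsequential convergence): every subsequence admits a further weakly convergent subsequence, any such limit is a $\varpi$-periodic evolution system of measures of $y^{\varepsilon_0}$, and uniqueness plus the Urysohn subsequence principle upgrade this to convergence of the full sequence. What your write-up adds is an explicit treatment of a point the paper leaves implicit: the proof of Theorem \ref{cov1}(ii) extracts a convergent subsequence at a single fixed time, whereas Theorem \ref{Tmc} requires weak convergence of $\mu^{\varepsilon_{n_k}}_\tau$ at the two times $s<t$ simultaneously (and the conclusion ``evolution system of measures'' quantifies over all pairs $s<t$). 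Your diagonal extraction over a countable $\varpi$-translation-invariant set $\widetilde D$, followed by propagation of the limit to arbitrary times via $P^{\varepsilon_0,\ast}_{s,\tau}$ (with consistency from the evolution property on $\widetilde D$ and periodicity inherited from that of the Markov process), closes this gap cleanly; the rest of your identification and uniqueness steps match the paper's route.
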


\begin{proof}
 Note that in the present case, by Theorem \ref{Tper}, for every $\varepsilon\in [0, 1]$, $y^{\varepsilon}(t,s,\xi,j)$
has a unique $\varpi$-periodic evolution system of measures, which along with Theorem \ref{cov1} implies the desired result.
 \end{proof}

\section{Well-Posedness of stochastic lattice systems}
\setcounter{equation}{0}

In this section, we
prove  the existence  and uniqueness of solutions
to system \eqref{eu1}-\eqref{eu2}.
We first discuss the assumptions on the nonlinear
drift and diffusion terms in \eqref{eu1}.

Throughout this paper, we assume
the  sequences  $g(j)=(g_i(j))_{i\in \mathbb Z}$
and $h(j)=(h_{i,k}(j))_{i\in \mathbb Z,k\in \mathbb N}$, $j\in S$,
belong to  $l^2$:
\begin{equation}\label{gh}
\left\| g(j) \right\|^2  = \sum\limits_{i \in \mathbb Z} {\left| {g_i(j) } \right|^2 }  < \infty \quad \text{and} \quad
\left\| h(j) \right\|^2  = \sum\limits_{i \in \mathbb Z}
{\sum\limits_{k \in \mathbb N} {\left| {h_{i,k}(j) } \right|^2 } }  < \infty .
\end{equation}
where $\| \cdot \|$ is the norm of $l^2$.
The inner product of $l^2$ will be denoted by $(\cdot, \cdot)$
throughout this paper.

Assume that  for $j\in S$
$f_i :\R \times j\times \R \rightarrow \mathbb  R$,
$f_i =f_i (\cdot, j, \cdot)$,
is continuous in  $ \R \times \R$   and
  globally Lipschitz in $s
\in \R $
uniformly with respect to $i\in \mathbb Z$, $t\in \R$ and $j\in S$; more precisely,
 there exists a constant $L_f>0$ such that
$\text{for all}\,\, t, \, s_1 ,s_2
\in \R\,\, \text{and} \,\, i \in \mathbb Z$ and $j\in S,$
\begin{equation}\label{f1}
\left| {f_i \left( {t,j,s_1 } \right) - f_i \left( {t,j,s_2 } \right)} \right|
\le L_f\left| {s_1  - s^*_1 } \right|.
\end{equation}
Moreover,
  $f_i (t,j, s) $
 grows linearly in $s\in \R$:   for each $i\in \mathbb Z$, $t\in \R$ and $j\in S$, there exists
 $\alpha_i>0$     such that
\begin{equation}\label{f2}
\left| {f_i \left(t, j,s \right)} \right| \le \alpha _i
+ \beta _0   |s|,
\quad\forall \
 t, s \in \R\quad \text{and} \quad i \in \mathbb Z,
\end{equation}
where  $\left( {\alpha _i } \right)_{i \in \mathbb Z}
\in l^2$
and $\beta_0: \R \to \R$ is a positive constant.

For the   diffusion terms in \eqref{eu1}, we assume for $j\in S$
 $\sigma_{i,k}:\R \times j\times \R \rightarrow \mathbb R$,
 $\sigma_{i,k}=  \sigma_{i,k}(\cdot,j,\cdot)$,
 is continuous in $ \R \times \R$
 and  globally Lipschitz in $s\in  \R $
  uniformly with respect to $i\in \mathbb Z$, $t\in \R$ and $j\in S$;
 more precisely, for every $k\in \N$,
there exists a constant $L_k>0$ such that for all
 $  t,   s_1 ,s_2 \in \R,\,j\in S\,\,
\text{and} \,\, i \in \mathbb Z$
\begin{equation}\label{s1}
\left| {\sigma _{i,k} \left( {t,j,s_1 } \right) - \sigma _{i,k} \left( {t,j,s_2 } \right)} \right|
\le L_k\left| {s_1  - s_2 } \right|,
\end{equation}
where
 $\left( {L _k } \right)_{k \in \mathbb N}\in l^2$.
In addition,     we  assume
 $\sigma_{i,k}(t,j,s)$
   grows linearly in
   $ s\in \R $; that
is, for each   $t\in \R$, $j\in S$, $i \in \mathbb Z$  and $k\in \N$,
 there exists   $\delta_{i,k}>0$ and $\beta_k>0$
  such that
\begin{equation}\label{s2}
\left| {\sigma _{i,k} \left( {t,s,s^* } \right)} \right| \le \delta _{i,k}
+ \beta _k    \left| s \right|,\quad\forall s\in \R,
\end{equation}
where
 $\left( {\delta _{i,k} } \right)_{i \in \mathbb Z,k \in \N}
 \in l^2$  and
 $ (\beta_k(\cdot))_{k\in \N}\in l^2$
 is a positive continuous function.

When we will investigate the periodic evolution system of  measures of
system \eqref{eu1}-\eqref{eu2},   we assume that

$(P)$ All  given  time-dependent functions  are
$\varpi$-periodic in $t \in \R$ for some $\varpi>0$; that is,
  for  all $t\in \R $,
  $i\in \mathbb Z$ and $k\in \mathbb N$,
\[
\begin{array}{l}
  f_i \left( {t + \varpi, \cdot , \cdot } \right)
  = f_i \left( {t, \cdot , \cdot } \right), \quad
 \sigma _{i,k} \left( {t + \varpi, \cdot , \cdot } \right)
 = \sigma _{i,k} \left( {t, \cdot , \cdot } \right).
 \end{array}
\]

The following notation will be used throughout the paper:
\[
\alpha  = \left( {\alpha _i } \right)_{i \in \mathbb Z}
,\,\,L = \left( {L_k } \right)_{k \in \N} ,\,\,\beta
= \left( {\beta _k } \right)_{k \in \N} ,\,\,\delta  = \left( {\delta _{i,k} } \right)_{i \in \mathbb Z,k \in \N},
\]
\[
\left\| \alpha  \right\|^2  = \sum\limits_{i \in \mathbb Z} {\left| {\alpha _i } \right|^2 } ,\,\,\left\| L \right\|^2
 = \sum\limits_{k \in \N} {\left| {L_k } \right|^2 } ,\,\,\left\|
 \beta
   \right\|^2
 = \sum\limits_{k \in \N} {\left| {\beta _k  } \right|^2 } ,\,\,\left\| \delta  \right\|^2
  = \sum\limits_{i\in \mathbb Z} {\sum\limits_{k \in \N} {\left| {\delta _{i,k} } \right|^2 } } .
\]
and
\[
\lambda  = \mathop {\min }\limits_{j \in S} \lambda \left( j \right),\quad \left\| g \right\| =
\mathop {\max }\limits_{j \in S} \left\| {g\left( j \right)} \right\|,\quad \left\| h \right\| =
\mathop {\max }\limits_{j \in S} \left\| {h\left( j \right)} \right\|.
\]

In addition,  for
 $u=(u_i)_{i\in \mathbb Z}
 \in l^2$, we write
  $f(t,j,u)=(f_i(t,j,u_i)_{i\in \mathbb Z}$
  and
  $\sigma _k \left( t,j,u \right)
 = \left( {\sigma _{i,k} \left( {t,j,u_i }
  \right)} \right)_{i \in \mathbb Z}$.
It follows from \eqref{f1}-\eqref{f2} that
for all $t\in \R$, $j\in S$ and $u_1,u_2\in l^2,$
\begin{equation}\label{f3}
\left\| {f\left(t, j,u_1 \right) - f\left(t, j,u_2 \right)} \right\|^2 \le
 L_f^2 \left\| {u_1 - u_2} \right\|^2,
\end{equation}
and
\begin{equation}\label{f4}
\left\| {f\left(t, j,u_1 \right)} \right\|^2  \le 2\left\| \alpha  \right\|^2
+ 2\beta _0^2  \left\| u_1 \right\|^2.
\end{equation}

Similarly, by  \eqref{s1}-\eqref{s2},
we have   for all $t\in \R$, $j\in S$ and $u_1,u_2\in l^2$,
\begin{equation}\label{s4}
\sum\limits_{k \in \N} {\left\| {\sigma _k \left( t,j,u_1 \right) - \sigma _k \left(t,j,u_2\right)} \right\|} ^2
  \le \left\| L \right\|^2\left\| {u_1 - u_2} \right\|^2
\end{equation}
and
\begin{equation}\label{s3}
\sum\limits_{k \in \N} {\left\| {\sigma _k
 \left(t, u_1,v_1 \right)} \right\|} ^2  \le 2\left\| \delta  \right\|^2
 + 2\left\| \beta  \right\|^2 \left\| u_1 \right\|^2.
\end{equation}

For simplicity,  define   linear operators $
A,B,   :l^2  \to l^2$  by
$$
 \left( {Au} \right)_i  = -u_{i - 1}  + 2u_i - u_{{i  + 1}} ,
 \quad
 \left( {Bu} \right)_i  = u_{i + 1}  - u_i ,
 \quad i \in \mathbb Z,\,\,u=(u_i)_{i\in \mathbb Z} \in l^2 .
$$
Then,
system  \eqref{eu1}-\eqref{eu2} can be
put into the following  form  in $l^2$
for $s\in \R$:
\begin{align}\label{eu3}
\begin{split}
 du\left( t \right) &+ \nu Au\left( t \right)dt + \lambda(r(t)) u\left( t \right)dt
 = \left( {f\left( {t,r(t),u(t)} \right) + g(r(t))} \right)dt \\
  &\quad +\varepsilon \sum\limits_{k = 1}^\infty
  {\left( {h_k(r(t))  + \sigma _k \left( {t,r(t),u\left( t \right)} \right)} \right)}
   dW_k \left( t \right),\quad  t>s,
\end{split}
\end{align}
with initial condition
\begin{align}\label{eu4}
u\left( s \right) = \xi.
\end{align}

Similar to  \cite{Mao}
for   stochastic  equations with Markovian switching  in $\R^n$,
under conditions
\eqref{gh}-\eqref{s2},
we can show  that
  for any $\xi  \in L^2(\Omega,l^2)$,
  system \eqref{eu3}-\eqref{eu4} has a unique solution, which is
  written as   $u(t)$. To highlight the
initial values, we let $r_{s,j}(t)$ be the Markov chain starting from state $i\in S$ at $t =s $ and
denote by $u(t,s,\xi,j)$ the solution of Eq. \eqref{eu3}-\eqref{eu4}
with initial conditions $u (s,s,\xi,j) = \xi\in L^2(\Omega,l^2) $ and
$r(s) = j$. Moreover, for  any bounded  subset $B$ of $l^2$,
\[
\mathop {\sup }\limits_{\left( {\xi,j} \right) \in B \times S} \E\left[ {\mathop {\sup }\limits_{s \le \tau  \le t}
 \| {u\left( {\tau,s,\xi ,j} \right)} \|^2 } \right] < \infty\quad \quad \forall t \ge s,
\]
which together with Chebyshev's inequality implies that

$(\mathcal A_0)$  For
any $s\in \R$, $T>0$, bounded set $B\subset H$ and $\eta>0$, there exists a
constant $R=R(\eta,B,T)>0$, independent of $s$, such that
for any $\xi\in B$ and $j\in S$,
\[
P\left\{ {\|{u\left( {t,s,\xi ,j} \right)}\| \ge R},\, t\in [s,s+T] \right\} < \eta.
\]

In the sequence, we will follow the definition in Section 2 and 3. Sometimes, we need to replace $H$ with $l^2$.

\section{Uniform estimates}
\setcounter{equation}{0}

In this section, we derive uniform estimates of the solution of problem \eqref{eu3}-\eqref{eu4}
which are necessary for establishing the existence and stability of evolution system of  measures.
 We assume that
\begin{equation}\label{mu}
\lambda  > 1 + \beta _0^2  + 2\left\| \beta  \right\|^2
\end{equation}
and
\begin{equation}\label{uc1}
\left\| L \right\|^2  + 4\lambda ^{ - 1} L_f^2  < \frac{7}{4}\lambda .
\end{equation}

We first discuss  uniform estimates of solutions of problem \eqref{eu3}-\eqref{eu4}.

\begin{lem}\label{guji}
Suppose \eqref{gh}-\eqref{s2} and \eqref{mu} hold. Then for
any $s\in \R$,  $\xi\in l^2$ and $\eta>0$, there exists a  bounded
 subset $B=(\eta,\xi)$ of $l^2$, independent of $s$, such that
for any $0<\varepsilon\leq1$, $\xi\in l^2$, $j\in S$ and $ t>s$,
\[
P\left\{ {{u\left( {t,s,\xi ,j} \right)}\in B} \right\} >1-\eta.
\]
\end{lem}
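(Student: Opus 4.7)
The plan is to apply It\^o's formula to $\|u(t,s,\xi,j)\|^2$ for the solution of \eqref{eu3}, take expectation to kill the local martingale, and then use Gronwall's inequality to produce a uniform second-moment bound. The dissipativity assumption \eqref{mu} is tailored precisely so that the resulting linear ODE has a strictly negative drift coefficient, making the bound uniform in the initial time $s$, the state $j\in S$, and the noise intensity $\varepsilon\in(0,1]$. Chebyshev's inequality then converts the second-moment bound into the probabilistic statement.

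Concretely, I would first apply the $l^2$-valued It\^o formula along a strong solution of \eqref{eu3}, writing $u(t)=u(t,s,\xi,j)$ and $r(t)=r_{s,j}(t)$ for brevity:
\begin{align*}
d\|u(t)\|^2 &= \bigl[-2\nu(Au,u) - 2\lambda(r(t))\|u\|^2 + 2(f(t,r(t),u),u) + 2(g(r(t)),u) \\
&\quad + \varepsilon^2\sum_{k\ge 1}\|h_k(r(t))+\sigma_k(t,r(t),u)\|^2\bigr]\,dt + dM(t),
\end{align*}
where $M$ is a local martingale. Since $A$ is non-negative ($(Au,u)=\|Bu\|^2\ge 0$), this term can be dropped. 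The remaining drift terms are estimated by: $-2\lambda(r(t))\|u\|^2\le -2\lambda\|u\|^2$; Cauchy--Schwarz and \eqref{f4} give $2|(f,u)|\le \|u\|^2 + 2\|\alpha\|^2 + 2\beta_0^2\|u\|^2$; $2|(g,u)|\le \|u\|^2+\|g\|^2$; and by \eqref{s3} together with $\varepsilon\le 1$, $\varepsilon^2\sum_k\|h_k+\sigma_k\|^2\le 2\|h\|^2 + 4\|\delta\|^2 + 4\|\beta\|^2\|u\|^2$. Collecting, the drift is bounded above by $-2\gamma\|u\|^2 + C$, where $\gamma:=\lambda - 1 - \beta_0^2 - 2\|\beta\|^2 > 0$ by \eqref{mu} and $C$ depends only on $\|\alpha\|,\|g\|,\|h\|,\|\delta\|$, but not on $s$, $j$ or $\varepsilon$.

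Taking expectation (after a standard localization to handle the fact that $M$ is a priori only a local martingale) yields
\[
\frac{d}{dt}\E\|u(t)\|^2 \le -2\gamma\,\E\|u(t)\|^2 + C,
\]
and Gronwall's inequality gives
\[
\E\|u(t,s,\xi,j)\|^2 \le e^{-2\gamma(t-s)}\|\xi\|^2 + \frac{C}{2\gamma}\bigl(1-e^{-2\gamma(t-s)}\bigr) \le \|\xi\|^2 + \frac{C}{2\gamma},
\]
uniformly in $s\in\R$, $j\in S$, $t\ge s$ and $\varepsilon\in(0,1]$. Chebyshev's inequality then gives $P\{\|u(t,s,\xi,j)\|\ge R\}\le R^{-2}(\|\xi\|^2+C/(2\gamma))$, and choosing $R=R(\eta,\xi)$ so that the right-hand side is less than $\eta$ and setting $B:=\{x\in l^2:\|x\|<R\}$ completes the proof.

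The main technical obstacle I expect is justifying the It\^o formula rigorously for the $l^2$-valued solution driven by countably many Wiener processes with Markov-switched coefficients. The standard remedy is to apply It\^o first to a finite-dimensional Galerkin truncation $u^{(n)}$ (for which the formula is classical), derive the same drift estimates there using \eqref{f4} and \eqref{s3}, and then pass to the limit $n\to\infty$ using the already-established well-posedness and Fatou's lemma. The Markov chain $r(t)$ contributes no extra term in It\^o's formula for a functional of $u$ alone, since only the coefficients (not the state of $r$) enter \eqref{eu3}; the uniformity in $j$ comes from using $\lambda\le\lambda(j)$, $\|g\|\ge\|g(j)\|$, $\|h\|\ge\|h(j)\|$ throughout. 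Everything else is routine algebra, and the dissipativity assumption \eqref{mu} is engineered precisely to close the argument.
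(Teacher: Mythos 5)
Your proposal is correct and follows essentially the same route as the paper's proof: It\^o's formula for $\|u(t)\|^2$, the linear-growth bounds \eqref{f4} and \eqref{s3} together with Young's inequality to obtain a drift of the form $-\varpi_1\E\|u\|^2+\varpi_2$ with $\varpi_1=2(\lambda-1-\beta_0^2-2\|\beta\|^2)>0$ by \eqref{mu}, then Gronwall and Chebyshev. Your added remarks on localization and Galerkin truncation only make explicit what the paper leaves implicit.
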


\begin{proof}
By \eqref{eu3} and Ito's formula, we get for $t >s$
\begin{align}\label{asb2}
\begin{split}
& \E\left( {\left\| {u\left( t \right)} \right\|^2 } \right) + 2\nu \int_{s}^t
 {\E(\left\| {Bu\left( \tau \right)} \right\|^2)} d\tau+ 2 \int_{s}^t
 {\lambda(r(\tau))\E(\left\| {u\left( \tau \right)} \right\|^2)} d\tau
  \\
 & = \E\left( {\left\| {u \left( s\right)} \right\|^2 } \right) +
 2 {\int_{s}^t {\E\left( {u\left( \tau \right),f\left( {\tau,r(\tau),u\left( \tau\right)} \right)} \right)d\tau} }  \\
  &\quad+ 2 {\int_{s}^t {\E\left( {u\left( \tau \right),g\left( r(\tau) \right)} \right)} } ds
  +\varepsilon^2  {\sum\limits_{k = 1}^\infty  {\int_{s}^t {\E\left( \left\| {h_k \left( r(\tau) \right) +
   \sigma _k \left( {\tau,r(\tau),u\left( \tau \right)} \right)} \right\|^2\right)  d\tau} } }.
\end{split}
\end{align}
By \eqref{f4} and \eqref{s3} we get for $t >s$
\begin{align}\label{sb5}
\begin{split}
\E\left( {\left\| {u\left( t \right)} \right\|^2 } \right)  \le
\E\left( {\left\| {u\left( s \right)} \right\|^2 } \right)  - \varpi _1
 \int_{s}^t {E\left( {\left\| {u\left( \tau \right)}
 \right\|^2 } \right)} d\tau + \varpi _2 (t-s),
\end{split}
\end{align}
where
\begin{align}\label{o1}
\varpi _1  = 2\lambda-2-2\beta _0^2-4\left\| \beta  \right\|^2
\end{align}
and
$$
\varpi _2  = 2\left( {\left\| \alpha  \right\|^2  + {\left\| g \right\|} ^2
+ {\left\| h \right\|} ^2  + 2\left\| \delta  \right\|^2 } \right).
$$
 Then, we get  for $t\geq 0$,
\begin{align}\label{sb8}
\E\left( {\left\| {u\left( t \right)} \right\|^2 } \right) \le
\E\left( {\left\| {\xi } \right\|^2 } \right)
e^{-\varpi_1(t-s)} + \frac{\varpi _2}{\varpi _1},
\end{align}
which together with Chebyshev's inequality completes the proof.
\end{proof}

In the following, we prove that
 any two solutions of  \eqref{eu3}-\eqref{eu4} converge to each other.

\begin{lem}\label{Lsc}
Suppose \eqref{gh}-\eqref{s2} and \eqref{uc1} hold. Then  for any $s\in \R$, $\eta>0$ and  bounded
 subset $B$ of $H$, there exists a $T=T(\eta,B)$, independent of $s$, such that
for $\left( {\xi _1 ,\xi _2 ,j} \right) \in B \times B \times S,$
\[
P\left\{ {\| {u\left( {t,s,\xi _1 ,j} \right)- u\left( {t,s,\xi _2 ,j} \right)} \| <
 \epsilon } \right\} \ge 1 -\eta,\quad \forall t-s \ge {\rm{T}}.
\]
\end{lem}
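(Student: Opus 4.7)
The plan is as follows. Fix $s\in\R$, $(\xi_1,\xi_2,j)\in B\times B\times S$, and let $u_i(t):=u(t,s,\xi_i,j)$, $i=1,2$, be the two solutions driven by the same Brownian motions $(W_k)_{k\ge 1}$ and the same Markov chain $r_{s,j}(\cdot)$. Set $v(t)=u_1(t)-u_2(t)$. Subtracting the two copies of \eqref{eu3} gives
\begin{align*}
dv + \nu A v\,dt + \lambda(r(t))\,v\,dt &= [f(t,r(t),u_1)-f(t,r(t),u_2)]\,dt\\
&\quad +\varepsilon\sum_{k\ge 1}[\sigma_k(t,r(t),u_1)-\sigma_k(t,r(t),u_2)]\,dW_k,
\end{align*}
with $v(s)=\xi_1-\xi_2$. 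Because both copies share the same chain, the switching appears only as the pointwise damping multiplier $\lambda(r(t))\ge \lambda$, and the nonlinearities enter only through Lipschitz differences that are controlled by \eqref{f3} and \eqref{s4}. This coupling is the key decoupling that makes the estimate as clean as in the non-switching case.

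Next, I would apply Ito's formula to $\|v(t)\|^2$, take expectation (the stochastic integral is a genuine martingale thanks to the a priori bounds behind Lemma \ref{guji}), and discard the non-negative dissipation $2\nu\E\|Bv\|^2$. Cauchy--Schwarz and Young's inequality on the cross term, together with \eqref{f3}, \eqref{s4}, $\varepsilon\le 1$, and $\lambda(r(\tau))\ge\lambda$, yield
\[
2\E(v,f(\cdot,r,u_1)-f(\cdot,r,u_2))\le \tfrac{\lambda}{4}\E\|v\|^2+4\lambda^{-1}L_f^2\E\|v\|^2,
\]
\[
\varepsilon^2\sum_k\E\|\sigma_k(\cdot,r,u_1)-\sigma_k(\cdot,r,u_2)\|^2\le\|L\|^2\E\|v\|^2.
\]
Combining these with the $-2\lambda\E\|v\|^2$ term produces
\[
\tfrac{d}{dt}\E\|v(t)\|^2\le -\gamma\,\E\|v(t)\|^2,\qquad \gamma:=\tfrac{7}{4}\lambda-\|L\|^2-4\lambda^{-1}L_f^2,
\]
and the structural hypothesis \eqref{uc1} is precisely what guarantees $\gamma>0$. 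Gronwall's lemma then gives the exponential contraction
\[
\E\|v(t)\|^2\le \|\xi_1-\xi_2\|^2\, e^{-\gamma(t-s)},\qquad t\ge s.
\]

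Finally, setting $M:=\sup_{\xi\in B}\|\xi\|<\infty$, we have $\|\xi_1-\xi_2\|^2\le 4M^2$, and Chebyshev's inequality gives
\[
P\{\|v(t)\|\ge \epsilon\}\le \epsilon^{-2}\E\|v(t)\|^2\le 4M^2\epsilon^{-2}e^{-\gamma(t-s)}.
\]
Choosing $T=T(\eta,B):=\gamma^{-1}\log\bigl(4M^2/(\epsilon^2\eta)\bigr)$ produces the required bound uniformly in $s\in\R$ and $j\in S$, since none of the constants depends on $s$ or $j$. The only substantive obstacle in the whole argument is the sign of the dissipativity margin $\gamma$; that is exactly the role of \eqref{uc1}. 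The Markovian switching raises no new difficulty because coupling both solutions through the common chain $r_{s,j}$ reduces the switching term to a deterministic, pathwise lower bound on the damping rate, leaving the Lipschitz estimates for $f$ and $\sigma_k$ to do all the work.
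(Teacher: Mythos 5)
Your proof is correct and follows essentially the same route as the paper's: couple the two solutions through the same Brownian motions and the same chain, apply It\^o's formula to $\E\|u_1-u_2\|^2$, use \eqref{f3}, \eqref{s4} and Young's inequality to obtain the decay rate $\gamma=\tfrac{7}{4}\lambda-\|L\|^2-4\lambda^{-1}L_f^2>0$ guaranteed by \eqref{uc1}, and finish with Gronwall and Chebyshev. Your write-up is in fact slightly more explicit than the paper's (it spells out the choice of $T$ and the role of the common chain), but there is no substantive difference.
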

\begin{proof}
For simplicity, we set $u_1(t)=u(t,s,\xi_1,j)$ and $u_2(t)=u(t,s,\xi_2,j)$
 for $t\geq s$.
By \eqref{eu3} we get, for $t\geq s$ and $\triangle t>0$,
\begin{align}\label{uc3}
 &\E\left( {\left\| {u_1\left( {t+\triangle t } \right) - u_2\left( {t+\triangle t } \right)} \right\|^2 } \right)
 -\E\left( {\left\| {u_1\left( {t } \right) - u_2\left( {t} \right)} \right\|^2 } \right) \nonumber\\
 &\le - 2\lambda
 \E \left (
  \int_t^{t+\triangle t} {\left\| {u_1\left( {s } \right) - u_2\left( {s } \right)} \right\|^2
   ds}  \right )  \nonumber\\
  &\quad+ 2\E\left( {\int_t^{t+\triangle t} {\left\| {u_1\left( {s } \right) - u_2\left( {s } \right)}
   \right\|\left\| {f\left( {s,r(s),u_1(s)} \right) - f\left( {s,r(s),u_2(s)} \right)} \right\|ds} } \right)  \nonumber\\
  &\quad+ \varepsilon^2\E\left( {\sum\limits_{k = 1}^\infty
   {\int_t^{t+\triangle t} {\left\| {\sigma _k \left( {s,r(s),u_1\left( {s}
  \right)} \right) - \sigma _k \left( {s,r(s),u_2\left( {s } \right)} \right)} \right\|^2 } } ds} \right).
 \end{align}
It follows from  \eqref{f3} and \eqref{s4} that for all $t\geq 0$,
\begin{align*}
 &D^+\E\left( {\left\| {u_1\left( {t } \right) - u_2\left( {t } \right)} \right\|^2 } \right)
 \le- \left( {\frac{7}{4}\lambda  - \left\| L \right\|^2-\frac{4}{\lambda }L_f^2 } \right) {\E\left( {\left\|
   {u_1\left( {t} \right) - u_2\left( {t} \right)} \right\|^2 } \right)}.
 \end{align*}
 which implies that
 \begin{align}\label{uc6}
\E\left( {\left\| {u\left( {t,s,\xi _1,j } \right) - u\left( {t,s,\xi _2,j } \right)} \right\|^2} \right)
 \le \E\left( {\left\| {\xi _1  - \xi _2 } \right\|^2 } \right)e^{ - \gamma (t-s)},
 \end{align}
 where $\gamma=  {\frac{7}{4}\lambda  - \left\| L \right\|^2-\frac{4}{\lambda }L_f^2 } $.
 Then  the desired inequality follows from \eqref{uc6} and Chebyshev's inequality  immediately.
  \end{proof}

Next, we derive uniform estimates on the tails of the solutions of \eqref{eu3}-\eqref{eu4}
which are crucial for establishing the tightness of the family of probability distributions of the solutions.

\begin{lem}\label{weibu}
Suppose \eqref{gh}-\eqref{s2} and \eqref{mu} hold. Then for every $s\in \R$,
 $\xi\in l^2$ and  $\eta>0,$ there exists a positive
integer $N=N(\xi,\epsilon)$, independent of  $s$, such that for all $0<\varepsilon\leq 1$, $n\ge N$ and $t\ge s$,
the solution $ u$ of \eqref{eu3}-\eqref{eu4}
satisfies,
$$\sum_{|i|\ge n} \E(|u_i(t,s,\xi,j)|^2)\le \eta.$$
\end{lem}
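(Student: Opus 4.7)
The plan is a standard cut-off (tail) estimate for stochastic lattice equations, combined with the dissipativity condition \eqref{mu} and the uniform $L^2$-in-$\xi$ estimate from Lemma~\ref{guji}. Fix a smooth function $\rho:\R_+\to[0,1]$ with $\rho(r)=0$ for $r\le 1$, $\rho(r)=1$ for $r\ge 2$, and $0\le \rho'\le C_0$ for some absolute constant $C_0$. For a positive integer $n$, set $\rho_i^{(n)}:=\rho(i^2/n^2)$, so that $\rho_i^{(n)}=0$ for $|i|\le n$ and $\rho_i^{(n)}=1$ for $|i|\ge \sqrt{2}\,n$. The quantity to control is
\[
 V_n(t):=\E\sum_{i\in\mathbb Z}\rho_i^{(n)}\bigl|u_i(t,s,\xi,j)\bigr|^2,
\]
which sandwiches $\sum_{|i|\ge \sqrt{2}n}\E|u_i|^2$ from above by $\sum_{|i|\ge n}\E|u_i|^2$.

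First I would apply It\^o's formula to $\sum_i \rho_i^{(n)}u_i^2$ using system \eqref{eu3}. The deterministic linear part produces two pieces: (a) $-2\lambda(r(t))\sum_i\rho_i^{(n)}u_i^2$, giving the dissipation $-2\lambda\sum_i\rho_i^{(n)}u_i^2$; and (b) the discrete-Laplacian contribution $2\nu\sum_i\rho_i^{(n)}u_i(u_{i+1}-2u_i+u_{i-1})$. A summation by parts rewrites the latter as
\[
-2\nu\sum_i\rho_{i+1}^{(n)}(Bu)_i^2-2\nu\sum_i u_i(\rho_{i+1}^{(n)}-\rho_i^{(n)})(Bu)_i,
\]
the first summand is $\le 0$, and since $|\rho_{i+1}^{(n)}-\rho_i^{(n)}|\le C_1/n$ (mean value theorem on $\rho'$), the second summand is bounded by $\tfrac{C_2\nu}{n}\|u\|^2$. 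Thus the $A$-term contributes at most $\tfrac{C_2\nu}{n}\E\|u\|^2$.

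Next I would bound the remaining terms by Cauchy--Schwarz and the linear-growth conditions \eqref{f2}, \eqref{s2} together with \eqref{gh}:
\[
 2\sum_i\rho_i^{(n)}u_i f_i\le (1+2\beta_0^2)\sum_i\rho_i^{(n)}u_i^2+\sum_{|i|\ge n}\alpha_i^2,
\]
\[
 2\sum_i\rho_i^{(n)}u_i g_i(r(t))\le \sum_i\rho_i^{(n)}u_i^2+\sum_{|i|\ge n}|g_i(r(t))|^2,
\]
and the It\^o correction
\[
 \eps^2\sum_i\rho_i^{(n)}\sum_k\bigl(h_{i,k}(r(t))+\sigma_{i,k}(t,r(t),u_i)\bigr)^2 \le 4\|\beta\|^2\sum_i\rho_i^{(n)}u_i^2+c\sum_{|i|\ge n}\bigl(|h_{i,\cdot}|^2+|\delta_{i,\cdot}|^2\bigr).
\]
Because the exponent in each "tail" sum restricts to $|i|\ge n$ and $\alpha,\delta\in l^2$, $\sum_k|h_{\cdot,k}(j)|^2\in l^1$, $g(j)\in l^2$ for every $j\in S$, all these tails tend to $0$ as $n\to\infty$; denote the sum of those tails by $\theta(n)\to 0$.

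Collecting everything and using \eqref{mu} gives, for $\varpi_1$ as in \eqref{o1} (which is strictly positive by \eqref{mu}),
\[
 \frac{d}{dt}V_n(t)\le -\varpi_1\,V_n(t)+\frac{C_2\nu}{n}\E\|u(t)\|^2+\theta(n).
\]
Lemma~\ref{guji} (more precisely the a priori bound \eqref{sb8}) gives $\E\|u(t)\|^2\le \|\xi\|^2+\varpi_2/\varpi_1$ uniformly in $t\ge s$, $s\in\R$ and $\eps\in(0,1]$. Gronwall then yields
\[
 V_n(t)\le e^{-\varpi_1(t-s)}\sum_{|i|\ge n}|\xi_i|^2+\frac{1}{\varpi_1}\Bigl(\frac{C_2\nu}{n}(\|\xi\|^2+\varpi_2/\varpi_1)+\theta(n)\Bigr).
\]
Since $\xi\in l^2$, the first term is bounded by $\sum_{|i|\ge n}|\xi_i|^2\to 0$ as $n\to\infty$; the second bracket is $O(1/n)+\theta(n)\to 0$. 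Given $\eta>0$, choose $N=N(\xi,\eta)$ large enough that both contributions are $\le \eta/2$; then $V_n(t)\le\eta$ for all $n\ge N$, $t\ge s$, $j\in S$, $s\in\R$, $\eps\in(0,1]$, and since $\rho_i^{(n)}\ge \mathbf 1_{|i|\ge\sqrt{2}n}$, replacing $n$ by $\lceil\sqrt{2}n\rceil$ produces the required bound $\sum_{|i|\ge n}\E|u_i|^2\le\eta$.

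The main technical obstacle is controlling the discrete Laplacian term: one must verify that the commutator between the weight $\rho_i^{(n)}$ and the second difference produces only an $O(1/n)$ loss times $\|u\|^2$, rather than something uncontrolled. Once that summation-by-parts identity is set up, the remaining work is routine bookkeeping of tails of $l^2$-data.
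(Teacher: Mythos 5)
Your proposal is correct and follows essentially the same route as the paper: a smooth cut-off weight supported on $|i|\ge n$, It\^o's formula, a summation-by-parts bound showing the discrete Laplacian commutator costs only $O(1/n)\,\E\|u\|^2$, tail sums of the $l^2$ data, and Gronwall with the dissipation rate $\varpi_1$ from \eqref{o1}. The only difference is that the paper delegates the technical estimate to the cited Lemma 4.2 of \cite{W2019}, whereas you carry it out explicitly; the details you supply are consistent with what that citation covers.
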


\begin{proof}
Let $\theta:\mathbb R\to\mathbb R$ be a smooth function such that $0\le\theta(s)\le 1$ for all $s\in\mathbb R$ and
\begin{equation}
\theta(s)=0,\quad \text{for}\ |s|\le 1,\ \text{and}\quad \theta(s)=1,\ \text{for}\ |s|\ge 2.
\end{equation}
Given $n\in\mathbb N$, denote by $\theta_n=(\theta(\frac{i}{n}))_{i\in\mathbb Z}$
and $\theta_nu=(\theta(\frac{i}{n})u_i)_{i\in\mathbb Z}$ for $u=(u_i)_{i\in\mathbb Z}$. By \eqref{eu3} we get
\begin{align}\label{te1}
\begin{split}
& d(\theta_n u(t))=-\theta_n\nu A u(t)dt-\theta_n\lambda(r(t))u(t)dt+\theta_nf(t, u(t))dt\\
&\quad+\theta_ng(t)dt+\sum_{k=1}^\infty(\theta_nh_k(t)+
\theta_n\sigma_k( t, u(t)))dW_k(t),\quad t>s.
\end{split}
\end{align}

By \eqref{te1}, Ito's formula and taking the expectation we obtain
for all $t >s$,
\begin{align}\label{te3}
\begin{split}
 &\E\left( {\left\| {\theta _n u\left( t \right)} \right\|^2 } \right) = \E\Big(\left\| {\theta _n
  u(s) } \right\|^2 \big)- 2 \int_{s}^t
   {\E\left( \theta _nA u\left( \tau \right),\theta _n u\left( \tau \right) \right)d\tau}\\
  &\quad- 2 \int_{s}^t
   {\E\left( \theta _n\lambda(r(\tau)) u\left( \tau \right),\theta _n u\left( \tau \right) \right)d\tau}\\
   &\quad+ 2\int_{s}^t {\E\left( {\theta _n u\left( \tau \right),\theta _n
   f\left( {\tau,r(\tau),u(\tau)} \right) + \theta _n g(r(\tau))} \right)d\tau}  \\
  &\quad+\varepsilon^2 \sum\limits_{k = 1}^\infty  {\int_{s}^t {\E\left( {\left\| {\theta _n h_k(r(\tau))
  + \theta _n \sigma _k \left( {\tau,r(\tau),u\left( \tau \right)} \right)} \right\|^2 } \right)} } d\tau.
  \end{split}
\end{align}
By the similar argument of the proof of Lemma 4.2 in \cite{W2019},
we have from \eqref{f4} and \eqref{s3} there exists $N=N(\xi,\eta)$, independent of  $s$,
 such that  for $0<\varepsilon\leq 1$, $t>s$  and $n\geq N$,
\begin{align}\label{te13}
 \E\left( {\left\| {\theta _n u\left( t \right)} \right\|^2 } \right)
  &\le c_1\epsilon
  -\varpi_1\int_s^t\E\left(
   {\left\| {\theta _n u\left( s \right)} \right\|^2 } \right)ds + c_2\epsilon,
\end{align}
where $\varpi_1$ is defined as  \eqref{o1} and $c_1, c_2>0$.
Then we get  for $t\geq s$  and $n\geq N$,
\begin{equation}\label{te15}
\E\left( {\left\| {\theta _n u\left( t \right)} \right\|^2 } \right) \le
c_1\epsilon e^{-\varpi_1(t-s)} + c_2\frac{\epsilon}{\varpi_1}\leq c\epsilon,
\end{equation}
where $c$ is independent of $t$, $s$ and $\epsilon$.
This completes the proof.

\end{proof}

\begin{lem}\label{Lpd}
Suppose \eqref{gh}-\eqref{s2}  hold. Then
for every  compact set $K\subseteq l^2$, $t\geq s$,  $\eta>0$
and $\varepsilon _1, \varepsilon _2\ge 0$,
\begin{equation}\label{u12}
\mathop {\lim }\limits_{\varepsilon _2  \to \varepsilon _1 } \mathop {\sup}\limits_{(\xi,j)  \in K\times S}
 P\left (
 {\| {u^{\varepsilon_2} \left( {t,s,\xi ,j} \right)-u^{\varepsilon_1}
 \left( {t,s,\xi,j } \right)} \|\ge \eta }
 \right) = 0.
\end{equation}
\end{lem}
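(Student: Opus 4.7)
The plan is to set $v(t)=u^{\varepsilon_{2}}(t,s,\xi,j)-u^{\varepsilon_{1}}(t,s,\xi,j)$ with $v(s)=0$, derive an $L^2$ bound of the form $\E\|v(t)\|^{2}\le C(\varepsilon_{2}-\varepsilon_{1})^{2}$ on the finite interval $[s,t]$ that is uniform in $(\xi,j)\in K\times S$, and then apply Chebyshev's inequality to conclude \eqref{u12}.

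First, subtracting the equations satisfied by $u^{\varepsilon_{2}}$ and $u^{\varepsilon_{1}}$, the process $v$ solves, for $\tau>s$,
\begin{align*}
dv(\tau)+\nu A v(\tau)d\tau+\lambda(r(\tau))v(\tau)d\tau
&=\bigl(f(\tau,r(\tau),u^{\varepsilon_{2}}(\tau))-f(\tau,r(\tau),u^{\varepsilon_{1}}(\tau))\bigr)d\tau\\
&\quad+\sum_{k=1}^{\infty}\Phi_{k}(\tau)\,dW_{k}(\tau),
\end{align*}
where
\[
\Phi_{k}(\tau)=(\varepsilon_{2}-\varepsilon_{1})\bigl(h_{k}(r(\tau))+\sigma_{k}(\tau,r(\tau),u^{\varepsilon_{2}}(\tau))\bigr)+\varepsilon_{1}\bigl(\sigma_{k}(\tau,r(\tau),u^{\varepsilon_{2}}(\tau))-\sigma_{k}(\tau,r(\tau),u^{\varepsilon_{1}}(\tau))\bigr).
\]
I would then apply It\^o's formula to $\|v(\tau)\|^{2}$, take expectation, and use the elementary identity $(Av,v)=\|Bv\|^{2}\ge 0$ to drop the $\nu A v$ term.

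Next, I would estimate the drift contribution by \eqref{f3} and Cauchy--Schwarz, and split the noise contribution through $(a+b)^{2}\le 2a^{2}+2b^{2}$ together with \eqref{s3}, \eqref{s4}, the bound $\varepsilon_{1}\le 1$ and $\|h(r(\tau))\|\le\|h\|$. This produces an inequality of the shape
\[
\tfrac{d}{d\tau}\E\|v(\tau)\|^{2}\le C_{1}\,\E\|v(\tau)\|^{2}+C_{2}(\varepsilon_{2}-\varepsilon_{1})^{2}\bigl(1+\E\|u^{\varepsilon_{2}}(\tau,s,\xi,j)\|^{2}\bigr),
\]
with constants $C_{1},C_{2}$ depending only on $\lambda$, $L_{f}$, $\|L\|$, $\|\beta\|$, $\|\delta\|$ and $\|h\|$. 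The uniform-in-$\varepsilon$ moment bound \eqref{sb8} (available since \eqref{mu} is in force throughout this section) yields $\sup_{\varepsilon\in[0,1]}\sup_{\tau\in[s,t]}\sup_{(\xi,j)\in K\times S}\E\|u^{\varepsilon}(\tau,s,\xi,j)\|^{2}\le M(t-s,K)<\infty$, because $K$ is a compact subset of $l^{2}$ and hence bounded. Applying Gronwall's lemma on $[s,t]$ and using $v(s)=0$ gives
\[
\sup_{(\xi,j)\in K\times S}\E\|v(t)\|^{2}\le C(t-s,K)\,(\varepsilon_{2}-\varepsilon_{1})^{2}.
\]
Finally, Chebyshev's inequality provides
\[
\sup_{(\xi,j)\in K\times S}P\bigl\{\|u^{\varepsilon_{2}}(t,s,\xi,j)-u^{\varepsilon_{1}}(t,s,\xi,j)\|\ge\eta\bigr\}\le\eta^{-2}C(t-s,K)\,(\varepsilon_{2}-\varepsilon_{1})^{2},
\]
which tends to $0$ as $\varepsilon_{2}\to\varepsilon_{1}$, proving \eqref{u12}.

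The only delicate step is the treatment of the diffusion coefficient, since it depends on both $\varepsilon$ and the (differing) solutions; the point is to add and subtract $\varepsilon_{1}\sigma_{k}(\tau,r(\tau),u^{\varepsilon_{2}}(\tau))$ so that one piece produces a Lipschitz term absorbable by Gronwall and the other contributes a factor $(\varepsilon_{2}-\varepsilon_{1})^{2}$ multiplying a quantity controlled by the uniform moment bound. Once this split is carried out, the rest is a routine Gronwall-plus-Chebyshev argument.
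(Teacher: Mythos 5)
Your argument is correct, and it is the standard route: the paper itself does not supply a proof of this lemma but merely refers to Lemma~6.2 of \cite{LWW2022}, and the decomposition you use (adding and subtracting $\varepsilon_1\sigma_k(\tau,r(\tau),u^{\varepsilon_2}(\tau))$ so that one piece carries the factor $(\varepsilon_2-\varepsilon_1)^2$ and the other is a Lipschitz term absorbed by Gronwall) is exactly what such a proof would contain, so you are filling in the omitted details rather than diverging from the paper. One small remark: the lemma's hypotheses are only \eqref{gh}--\eqref{s2}, so you should not lean on \eqref{mu} and the dissipative bound \eqref{sb8} for the moment estimate; but this costs nothing, since on the finite horizon $[s,t]$ a crude Gronwall bound already gives $\sup_{\varepsilon\in[0,1]}\sup_{\tau\in[s,t]}\sup_{(\xi,j)\in K\times S}\E\|u^{\varepsilon}(\tau,s,\xi,j)\|^{2}<\infty$ from the linear growth conditions \eqref{f4} and \eqref{s3} alone, using only that $K$ is bounded. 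With that substitution the proof is complete as written.
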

\begin{proof}
The proof is similar as that of Lemma 6.2 in \cite{LWW2022}, so we omit it here.
\end{proof}

The transition operators $\{P_{\tau,t}\}_{s\le \tau\le t}$  have the
following properties.

\begin{lem}\label{markovp1}
 Assume that  \eqref{gh}-\eqref{s2}  hold. Then:

 $(i)$   $\left\{ {P_{\tau,t} } \right\}_{s\le \tau\le t} $ is Feller;

  $(ii)$ Under the additional condition $(P)$, the family $\left\{ {P_{r,t} } \right\}_{s \le r \le t} $
is $\varpi$-periodic; that is,  for all $s \le r \le t$,
\[
P\left( {r,t,\xi,j, (\cdot,\cdot) } \right) = P\left( {r+\varpi,t+\varpi,\xi,j, (\cdot,\cdot) } \right),
\quad\forall (\xi,j)  \in l^2\times S;
\]

$(iii)$  ${y \left( {t,s,\xi,j } \right)}$
 is a $(l^2\times S)$-valued nonhomogeneous  Markov process  for  evrty  $\xi  \in l^2$ and $j\in S$.

\end{lem}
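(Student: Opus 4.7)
The plan is to handle the three parts separately, since they rely on different mechanisms. For part (i), because $S$ is finite with the discrete topology, a function on $l^2 \times S$ is continuous iff it is continuous in its first argument for each fixed $j$. So I fix $s\le \tau \le t$, $j\in S$, and $\varphi\in C_b(l^2\times S)$, and show that $\xi\mapsto P_{\tau,t}\varphi(\xi,j)=\E\varphi(u(t,\tau,\xi,j),r_{\tau,j}(t))$ is continuous on $l^2$. The key ingredient is a continuous-dependence bound on initial data. Applying Ito's formula to $\|u(t,\tau,\xi_1,j)-u(t,\tau,\xi_2,j)\|^2$ pathwise along $r_{\tau,j}(\cdot)$ and using \eqref{f3}, \eqref{s4}, and the boundedness of $A$ on $l^2$, a Gronwall-type estimate yields
\[
\E\|u(t,\tau,\xi_1,j)-u(t,\tau,\xi_2,j)\|^2 \le e^{c(t-\tau)}\|\xi_1-\xi_2\|^2.
\]
If $\xi_n\to\xi$ in $l^2$, this gives convergence in $L^2$, hence in probability; boundedness and continuity of $\varphi$ then let dominated convergence conclude $P_{\tau,t}\varphi(\xi_n,j)\to P_{\tau,t}\varphi(\xi,j)$.

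For part (ii), I would show that under $(P)$ the law of the solution is invariant under the joint shift $(s,t)\mapsto(s+\varpi,t+\varpi)$. Set $\tilde u(t)=u(t+\varpi,\tau+\varpi,\xi,j)$, $\tilde r(t)=r_{\tau+\varpi,j}(t+\varpi)$, and $\tilde W_k(t)=W_k(t+\varpi)-W_k(\varpi)$. Then $(\tilde W_k)$ is again a sequence of independent standard Wiener processes, $\tilde r(\cdot)$ is a Markov chain on $S$ with the same generator $\Gamma$ started at $j$ (by time-homogeneity of $r$), and substituting into \eqref{eu3} using the identities $f(t+\varpi,\cdot,\cdot)=f(t,\cdot,\cdot)$ and $\sigma_k(t+\varpi,\cdot,\cdot)=\sigma_k(t,\cdot,\cdot)$ shows that $(\tilde u,\tilde r)$ satisfies on $[\tau,t]$ the same SDE with switching and initial condition $(\xi,j)$ as $(u(\cdot,\tau,\xi,j),r_{\tau,j}(\cdot))$. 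Uniqueness in law then yields $P(\tau+\varpi,t+\varpi,\xi,j,\cdot)=P(\tau,t,\xi,j,\cdot)$.

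For part (iii), the Markov property of the pair $(u(t),r(t))$ is a standard fact for SDEs with regime switching, as in \cite{Mao}. The point is that although $u(t)$ alone is not Markov (its evolution depends on the full path of $r$), the joint process is: conditional on $(u(s),r(s))=(\xi,j)$, the future $\{r(\tau):\tau\ge s\}$ depends only on $j$ by the Markov property of $r$ and is independent of $\{W_k(\tau)-W_k(s):\tau\ge s\}$, while $\{u(\tau):\tau\ge s\}$ is the strong solution of \eqref{eu3} driven by these increments and this future switching path starting from $\xi$ at time $s$. Time-nonhomogeneity is immediate from the explicit $t$-dependence of $f$ and $\sigma$.

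The main obstacle I expect is the Feller property in (i): the continuous-dependence estimate is conceptually routine but must be justified in the presence of the jumps of $r_{\tau,j}(\cdot)$, typically by conditioning on the $\sigma$-algebra generated by $r_{\tau,j}$, applying Ito's formula between consecutive jump times, and then taking expectation. The periodicity and Markov arguments, by contrast, follow from uniqueness in law combined with the independence of $(W_k)$ and $r$.
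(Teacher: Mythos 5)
Your proposal is correct and, for part (ii) --- the only part the paper actually proves --- it follows essentially the same route: shift time by $\varpi$, use the periodicity of $f$ and $\sigma_k$, the shifted Brownian motions $\tilde W_k(t)=W_k(t+\varpi)-W_k(\varpi)$, the time-homogeneity of the chain $r$, and uniqueness in law. The paper dismisses (i) and (iii) as standard, and your Gronwall-based continuous-dependence argument for the Feller property and the appeal to \cite{Mao} for the joint Markov property are exactly the standard details being omitted.
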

\begin{proof}
The  properties $(i)$ and  $(iii)$
are standard  and the proof is  omitted.
 We only prove   $(ii)$.
By \eqref{eu3} we have for $t\geq \tau\geq s$ and $(\xi,j)  \in l^2\times S$,
\begin{align}\label{p1}
\begin{split}
 u\left( t, \tau, \xi,j \right) &+ \nu \int_\tau^t {Au\left( s, \tau, \xi,j \right)} ds
 +  \int_\tau^t {\lambda(r_{\tau,j}(s))u\left(  s, \tau, \xi,j \right)ds}\\
  &\quad
  = \xi  + \int_\tau^t \left( f ( s,r_{\tau,j}(s),u(s, \tau, \xi,j)  )
  + g (r_{\tau,j}(s))  \right) ds  \\
  &\quad+\varepsilon \sum\limits_{k = 1}^\infty
   {\int_\tau^t {\left( {h_k(r_{\tau,j}(s))
    +
  \sigma _k \left( {s,r_{\tau,j}(s),u\left( s,\tau, \xi,j \right)} \right)} \right)} }
   dW_k \left( s \right).
 \end{split}
\end{align}
We also have
\begin{align*}
 u\left( t+\varpi, \tau+\varpi, \xi,j  \right) &
 + \nu \int_{\tau+\varpi}^{t+\varpi} {Au\left( s,\tau+\varpi, \xi,j \right)} ds
 + \int_{\tau+\varpi}^{t+\varpi} {\lambda (r_{\tau+\varpi,j}(s))u\left( s ,\tau+\varpi, \xi,j\right)ds}\\
 &\quad
  = \xi  + \int_{\tau+\varpi}^{t+\varpi}
   {\left( {f\left( {s,r_{\tau+\varpi,j}(s),u(s,\tau+\varpi, \xi,j)} \right)
   + g(r_{\tau+\varpi,j}(s))} \right)ds}  \\
  &\quad+\varepsilon \sum\limits_{k = 1}^\infty
    {\int_{\tau+\varpi}^{t+\varpi} {\left( {h_k (r_{\tau+\varpi,j}(s)) +
  \sigma _k \left( {s,r_{\tau+\varpi,j}(s),u\left( s,\tau+\varpi, \xi,j \right)} \right)} \right)} }
   dW_k \left( s \right),
\end{align*}
which implies from $(P)$  that
\begin{align}\label{p2}
\begin{split}
 u & ( t+\varpi,\tau+\varpi, \xi,j  )+ \nu \int_{r}^{t} {Au\left( s+\varpi,\tau+\varpi, \xi,j \right)} ds \\
 & = \xi -\int_{\tau}^{t} { \lambda(r_{\tau+\varpi,j}(s+\varpi))u\left( s+\varpi,\tau+\varpi, \xi,j \right)ds}\\
 &\quad+ \int_{\tau}^{t} {\left( {f\left( {s+\varpi,r_{\tau+\varpi,j}(s+\varpi),u(s+\varpi,\tau+\varpi, \xi,j)} \right)
 + g(r_{\tau+\varpi,j}(s+\varpi))} \right)ds}  \\
  &\quad+\varepsilon \sum\limits_{k = 1}^\infty  {\int_{\tau}^{t}
   {\left( {h_k (r_{\tau+\varpi,j}(s+\varpi))  +
  \sigma _k \left( {s+\varpi,r_{\tau+\varpi,j}(s+\varpi),u\left( s+\varpi,\tau+\varpi, \xi,j \right)} \right)}
   \right)} }
   d\tilde W
_k \left( s \right)\\
    & = \xi  -\int_{\tau}^{t} { \lambda(r_{\tau,j}(s))u\left( s+\varpi,\tau+\varpi, \xi,j \right)ds}\\
 &\quad+ \int_{\tau}^{t}
    {\left( {f\left( {s,r_{\tau,j}(s),u(s+\varpi,\tau+\varpi, \xi,j)} \right)
    + g(r_{\tau,j}(s))} \right)ds} \\
  &\quad+\varepsilon \sum\limits_{k = 1}^\infty  {\int_{\tau}^{t}
  {\left( {h_k(r_{\tau,j}(s))  +
  \sigma _k \left( {s,r_{\tau,j}(s),u\left( s+\varpi,\tau+\varpi, \xi,j \right)} \right)} \right)} }
   d\tilde W_k \left( s \right),
   \end{split}
\end{align}
where $\tilde W_k \left( s \right)=W_k(s+\varpi)-W_k(\varpi)$, $k\in \N$,
are Brownian motions  as well.
Based on  \eqref{p1}-\eqref{p2},
one can show that
  $(u({t+\varpi},\tau+\varpi,\xi,j),r_{\tau+\varpi,j}(t+\varpi))$ and $(u(t,\tau,\xi,j),r_{\tau,j}(t))$
 have the same   distribution law.
Consequently,
for any $A\in \mathcal B\left(l^2 \right)$ and $j^*\in S$
\[
P\left\{ {y\left( {t+\varpi,\tau + \varpi,\xi,j } \right) \in (A,j^*)} \right\}
= P\left\{ {y\left( {t,\tau,\xi,j } \right) \in (A,j^*)} \right\},
\]
that is
\[
P\left( {t +\varpi,\tau+ \varpi,\xi,j,(A,j^*)} \right) = P\left( {t,\tau,\xi,j,(A,j^*)} \right),
\]
which completes the proof.
\end{proof}

\section{Main results}
\setcounter{equation}{0}

We are now in a position to prove the existence and stability of
evolution system of measures of \eqref{eu3}-\eqref{eu4}.
\begin{thm}\label{Tec}
Suppose \eqref{gh}-\eqref{s2} and \eqref{mu}-\eqref{uc1} hold.
Then \eqref{eu3}-\eqref{eu4} has an evolution system of measures $\{\mu_t\}_{t\in \R}$, which is
pullback and forward  asymptotic stability in distribution.

\end{thm}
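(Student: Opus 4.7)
The strategy is to cast the proof as a direct verification of the hypotheses of the abstract framework built in Section~2, applied to the process $y(t,s,\xi,j)=(u(t,s,\xi,j),r_{s,j}(t))$ on the Polish space $l^2\times S$. The three ingredients needed from the abstract theory are: $(A_0)$ (local boundedness in probability uniformly in $s$), $(A_1)$ (a uniformly bounded attracting set in probability), and $(A_2)$ (pairwise contraction in probability), together with the Feller and Markov properties of $P_{s,t}$. All of these have already been prepared in Sections~4--5, so the proof will essentially consist of assembling the pieces.

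First, I would note that $(\mathcal A_0)$ was established at the very end of Section~4, that the Feller property is Lemma~\ref{markovp1}(i), and that $y(t,s,\xi,j)$ is a time-nonhomogeneous Markov process by Lemma~\ref{markovp1}(iii). Next, under the dissipativity assumption \eqref{mu} the exponential estimate \eqref{sb8} obtained in Lemma~\ref{guji} combined with Chebyshev's inequality yields precisely property $(A_1)$: for every $s\in\mathbb R$, $\xi\in l^2$ and $\eta>0$ one can pick $R=R(\xi,\eta)$ independent of $s$ so that $P\{\|u(t,s,\xi,j)\|\le R\}\ge 1-\eta$ for all $t\ge s$ and $j\in S$. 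Under the contraction assumption \eqref{uc1}, the exponential decay \eqref{uc6} in Lemma~\ref{Lsc} together with Chebyshev's inequality gives property $(A_2)$ uniformly in $s$.

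With $(A_0)$, $(A_1)$, $(A_2)$, and the Feller property verified, I would now invoke the abstract results of Section~2 directly. Lemma~\ref{Lcau} produces, for every $t\in\mathbb R$, a unique measure $\mu_t\in\mathcal P(l^2\times S)$, independent of the initial state $(\xi,j)$, such that $d_L^*(P_{s,t}^*\delta_{\xi,j},\mu_t)\to 0$ as $s\to-\infty$; this is pullback asymptotic stability in distribution (Remark~\ref{rpc}). Theorem~\ref{Lev} then asserts that the family $\{\mu_t\}_{t\in\mathbb R}$ is in fact an evolution system of measures for $\{P_{s,t}\}$. Finally, Theorem~\ref{Lfc} applied to the same family $\{\mu_t\}$ yields forward asymptotic stability in distribution.

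There is no hard technical obstacle since the genuinely analytic work sits in Lemmas~\ref{guji} and \ref{Lsc}, which are already proved. The only point to check carefully is that the constants produced by those lemmas are uniform in the initial time $s$---which they are, because the drift coefficients and the dissipative inequality \eqref{sb5}, \eqref{uc6} depend on $t$ only through the Markov chain $r(t)$ and through quantities uniformly controlled by the hypotheses in \eqref{f1}--\eqref{s2}. Once that uniformity is observed, the proof reduces to citing Lemma~\ref{guji}, Lemma~\ref{Lsc}, Lemma~\ref{markovp1}, Theorem~\ref{Lev}, and Theorem~\ref{Lfc}.
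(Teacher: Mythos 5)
Your proposal is correct and follows essentially the same route as the paper: it identifies Lemma~\ref{guji} and Lemma~\ref{Lsc} with conditions $(A_1)$ and $(A_2)$ (with $H=l^2$), uses Lemma~\ref{markovp1} for the Markov and Feller properties, and then invokes Theorem~\ref{Lev}, Remark~\ref{rpc} and Theorem~\ref{Lfc}. The additional remarks on uniformity in $s$ are consistent with what the cited lemmas already provide, so nothing is missing.
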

\begin{proof}
The results of Lemma \ref{guji} and Lemma \ref{Lsc} collide  with the conditions $(A_1)$ and $(A_2)$
with $H$ replaced by $l^2$, respectively.
It follows from Lemma \ref{markovp1} that $y^\varepsilon(t,s,\xi,j)$ are  Markov processes
and the  transition  operator of $y^\varepsilon(t,s,\xi,j)$ is Feller.
By Theorem \ref{Lev}, Remark \ref{rpc}
and Theorem \ref{Lfc}, we complete the proof  immediately.

\end{proof}

\begin{thm}\label{Tpec}
Suppose $(P)$, \eqref{gh}-\eqref{s2} and \eqref{mu}-\eqref{uc1} hold.
Then \eqref{eu3}-\eqref{eu4} has a unique $\varpi$-periodic evolution system of
measures $\{\mu_t\}_{t\in \R}$, which is
pullback and forward  asymptotic stability in distribution.

\end{thm}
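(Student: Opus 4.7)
The plan is to reduce Theorem \ref{Tpec} to Theorem \ref{Tper} by checking, in the lattice setting, each of the abstract hypotheses that theorem requires, and then invoking it with $H=l^2$. Concretely, I would establish that the $(l^2 \times S)$-valued Markov process $y(t,s,\xi,j)=(u(t,s,\xi,j),r_{s,j}(t))$ associated with \eqref{eu3}-\eqref{eu4} is Feller, is $\varpi$-periodic, and that the $l^2$-component $u(t,s,\xi,j)$ satisfies conditions $(A_1)$ and $(A_2)$.

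First, I would appeal to Lemma \ref{markovp1}: part $(iii)$ identifies $y(t,s,\xi,j)$ as a time-nonhomogeneous Markov process, part $(i)$ gives the Feller property of the transition evolution operators $\{P_{\tau,t}\}$, and, crucially under assumption $(P)$, part $(ii)$ provides the $\varpi$-periodicity
\[
P(r,t,\xi,j,(\cdot,\cdot))=P(r+\varpi,t+\varpi,\xi,j,(\cdot,\cdot)),\qquad s\le r\le t.
\]
Thus the structural hypothesis of Theorem \ref{Tper} that the processes be $\varpi$-periodic Markov processes is in place.

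Next, I would verify the dynamical conditions $(A_1)$ and $(A_2)$ on the $l^2$-component. Under \eqref{gh}-\eqref{s2} and the dissipativity assumption \eqref{mu}, Lemma \ref{guji} yields exactly the statement of $(A_1)$ for $u(t,s,\xi,j)$ on $l^2$, uniformly in $s\in\R$, $j\in S$, and $0<\eps\le1$. Under the additional smallness assumption \eqref{uc1}, Lemma \ref{Lsc} establishes the convergence of any two solutions starting from a bounded set in $l^2$, which is precisely $(A_2)$ in our setting (with $H$ replaced by $l^2$). This mirrors the verification already done in the proof of Theorem \ref{Tec}; the only new ingredient here is the periodicity from assumption $(P)$.

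Finally, having verified all the hypotheses, I would apply Theorem \ref{Tper} directly: it produces a unique $\varpi$-periodic evolution system of measures $\{\mu_t\}_{t\in\R}\subset\mathcal P(l^2\times S)$, independent of $(\xi,j)$, satisfying
\[
\lim_{s\to-\infty}d_L^*(P_{s,t}^*\delta_{\xi,j},\mu_t)=0,\qquad
\lim_{t\to+\infty}d_L^*(P_{s,t}^*\delta_{\xi,j},\mu_t)=0,
\]
i.e., it is both pullback and forward asymptotically stable in distribution. I do not expect a significant obstacle here, since all the substantive work has been carried out: the dissipative estimate of Lemma \ref{guji}, the contractivity estimate of Lemma \ref{Lsc}, and the invariance-of-law argument behind Lemma \ref{markovp1}$(ii)$ (which uses the time-shift invariance of the Brownian increments $\tilde W_k(s)=W_k(s+\varpi)-W_k(\varpi)$ together with $(P)$). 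The proof is therefore essentially a one-line citation of Theorem \ref{Tper}; the only point deserving explicit mention is that uniqueness of the periodic evolution system of measures follows from the same pullback convergence argument given at the end of the proof of Theorem \ref{Tper}, applied to any two such periodic families along the subsequence $s_n=t-n\varpi\to-\infty$.
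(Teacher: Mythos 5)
Your proposal is correct and matches the paper's own argument: the paper likewise invokes Lemma \ref{markovp1} for the Feller property and the $\varpi$-periodicity of $y(t,s,\xi,j)$ under $(P)$, verifies $(A_1)$ and $(A_2)$ via Lemmas \ref{guji} and \ref{Lsc} exactly as in Theorem \ref{Tec}, and then concludes by applying Theorem \ref{Tper}. Your write-up simply spells out the details that the paper's one-line proof leaves implicit.
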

\begin{proof}
 Under conditions $(P)$ and  \eqref{gh}-\eqref{s2},
 it follows from Lemma \ref{markovp1} that $y^\varepsilon(t,s,\xi,j)$ are $\varpi$-periodic Markov processes.
By the similar argument as that in Theorem \ref{Tec}  and
Theorem \ref{Tper}, we complete the proof  immediately.

\end{proof}

Next,   we discuss the limiting behavior of
evolution system of  measures of \eqref{eu3}-\eqref{eu4}
as $\eps \to 0$
by applying   Theorem \ref{cov1}.
Note that all results in the previous sections are valid for $\varepsilon=0$ in which case the proof is actually
simpler.
For convenience,
 we  now write the solution
  of   \eqref{eu3}-\eqref{eu4} as  $u^\varepsilon(t,s,\xi,j)$
 with  $\varepsilon\in [0,1]$, $\xi\in l^2$ and $j\in S$
  and the evolution system of  measures of \eqref{eu3}-\eqref{eu4} obtained in \ref{Tec}
  as $\{\mu_t^\varepsilon\}_{t\in \R}$.

\begin{thm}\label{ctov}
Suppose \eqref{gh}-\eqref{s2} and \eqref{mu}-\eqref{uc1} hold.
Then:

(i) For every $t\in \R$, the union
$\bigcup\limits_{\varepsilon \in [0,1]}
  \mu_t^\varepsilon $ is tight.

(ii)   If $\varepsilon_n \to \varepsilon_0 \in [0,1]$,
   then there exists a
subsequence $\varepsilon_{n_k}$ and a evolution system of measures
$\{\mu_t^{\varepsilon_0}\}_{t\in \R}$ of $y^{\varepsilon_0}(t,s,\xi,j)$
such that
$ \mu^{\varepsilon_{n_k}}_t \rightarrow \mu^{\varepsilon_0}_t$ weakly.
\end{thm}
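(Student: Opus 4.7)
The plan is to reduce the theorem to the abstract convergence result, Theorem \ref{cov1}, applied in the Polish space $H=l^{2}$. For this I must verify the three hypotheses $(A_{2})$, $(A_{3})$ and $(A_{4})$ for the family of processes $u^{\varepsilon}(t,s,\xi,j)$, $\varepsilon\in[0,1]$, and then invoke Theorem \ref{cov1} directly.

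First I would record that Lemma \ref{Lsc}, proved under \eqref{gh}--\eqref{s2} and \eqref{uc1}, is precisely the statement of $(A_{2})$ for each fixed $\varepsilon\in[0,1]$; an inspection of its proof shows that the rate constant $\gamma=\tfrac{7}{4}\lambda-\|L\|^{2}-4\lambda^{-1}L_{f}^{2}$ is independent of $\varepsilon$, so the estimate is uniform in $\varepsilon$. Similarly Lemma \ref{Lpd} is exactly the continuous-dependence statement needed for $(A_{3})$. Thus $(A_{2})$ and $(A_{3})$ come for free.

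The main step, and the one most likely to require care, is the verification of $(A_{4})$, which asks for a single compact set $K\subset l^{2}$, independent of $s$ and of $\varepsilon\in[0,1]$, in whose $\eta$-neighbourhood the solution lies with probability $\ge 1-\eta$ for every $t>s$ and every $j\in S$. I would combine two ingredients already at hand. From the uniform bound \eqref{sb8} in the proof of Lemma \ref{guji}, there exists $R_{0}=R_{0}(\eta,\xi)>0$, independent of $s$, $\varepsilon$ and $j$, such that
\begin{equation*}
P\bigl\{\|u^{\varepsilon}(t,s,\xi,j)\|\le R_{0}\bigr\}\ge 1-\tfrac{\eta}{2},\qquad \forall\,t>s.
\end{equation*}
From Lemma \ref{weibu}, for every $k\in\mathbb{N}$ there exists $N_{k}=N_{k}(\xi,\eta)$, independent of $s$ and $\varepsilon$, so that
\begin{equation*}
\sum_{|i|\ge N_{k}}\E\bigl(|u_{i}^{\varepsilon}(t,s,\xi,j)|^{2}\bigr)\le 2^{-2k-2}\eta\,\tfrac{1}{k}.
\end{equation*}
By Chebyshev's inequality, the event
\begin{equation*}
E_{k}:=\Bigl\{\sum_{|i|\ge N_{k}}|u_{i}^{\varepsilon}(t,s,\xi,j)|^{2}\le 2^{-k}\Bigr\}
\end{equation*}
has probability at least $1-\eta/2^{k+1}$. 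Taking
\begin{equation*}
K:=\Bigl\{v\in l^{2}:\|v\|\le R_{0},\;\sum_{|i|\ge N_{k}}|v_{i}|^{2}\le 2^{-k}\text{ for all }k\in\mathbb{N}\Bigr\},
\end{equation*}
a standard characterization of relative compactness in $l^{2}$ (uniform boundedness together with uniformly vanishing tails) shows that $\overline{K}$ is compact, and intersecting the boundedness event with $\bigcap_{k}E_{k}$ yields
\begin{equation*}
P\{u^{\varepsilon}(t,s,\xi,j)\in \overline{K}\}\ge 1-\eta,
\end{equation*}
uniformly in $s$, $t>s$, $\varepsilon\in[0,1]$ and $j\in S$. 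This establishes $(A_{4})$ with the same compact set for every $\varepsilon$.

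Finally, once $(A_{2})$, $(A_{3})$ and $(A_{4})$ are in place, Theorem \ref{cov1} applied in the Polish space $l^{2}$ gives conclusions (i) and (ii) of the present theorem verbatim: tightness of $\bigcup_{\varepsilon\in[0,1]}\mu_{t}^{\varepsilon}$ is exactly Theorem \ref{cov1}(i), and the existence of a weakly convergent subsequence with limit an evolution system of measures of $y^{\varepsilon_{0}}$ is Theorem \ref{cov1}(ii). I expect the assembly of the compact set for $(A_{4})$, with all constants genuinely independent of $\varepsilon$ and $s$, to be the only nontrivial point; everything else is an invocation of already-proved lemmas.
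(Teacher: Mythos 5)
Your proposal is correct and follows essentially the same route as the paper: verify $(A_2)$ via Lemma \ref{Lsc}, $(A_3)$ via Lemma \ref{Lpd}, and $(A_4)$ from the uniform-in-$\varepsilon$ estimates of Lemmas \ref{guji} and \ref{weibu}, then invoke Theorem \ref{cov1} with $H=l^2$. The only difference is that you write out the standard compactness construction (uniform bound plus uniformly vanishing tails) for $(A_4)$ explicitly, whereas the paper delegates it to the proof of Lemma 4.2 of \cite{LWW2022}; your version is the correct pointwise-in-$t$ form that the tightness argument actually needs.
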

\begin{proof}
Since all uniform estimates
given in  Lemmas \ref{guji} and \ref{weibu}
are uniform with respect to $\varepsilon\in [0,1]$,
by  the similar  proof that of Lemma 4.2 in \cite{LWW2022},
it is easy to verify that
for any $s,t\in \R$,  $\xi\in l^2$ and $\eta>0$, there exists a
compact  $K=(\eta, \xi)\subset l^2$, independent of $s,t$ and $\varepsilon$, such that
for any  $j\in S$,
\[
P\left\{ {{u^\varepsilon\left( {t,s,\xi ,j} \right)}\in K,\quad s<t} \right\} >1- \eta.
\]
This means that the condition $(A_4)$ with $H$ replaced by $l^2$ is satisfied.
The results of   Lemma \ref{Lsc} and Lemma \ref{Lpd}
coincide  with the conditions  $(A_2)$  and $(A_3)$
with $H$ replaced by $l^2$, respectively.
By Theorem \ref{cov1}, we complete the proof.
\end{proof}

The following result about  the limiting behavior  of $\varpi$-periodic
 evolution system of measures of  $y^\varepsilon(t,s,\xi,j)$ follows
from Theorem  \ref{cov2} immediately.

\begin{thm}
Suppose $(P)$, \eqref{gh}-\eqref{s2} and \eqref{mu}-\eqref{uc1}   hold.
Let $ \varepsilon_n,\varepsilon_0\in[0, 1]$
for all $n\in \N$ such that $\varepsilon_n\rightarrow \varepsilon_0$.
If $\{\mu_t^{\varepsilon_n}\}_{t\in \R}$ and $\{\mu^{\varepsilon_0}_t\}_{t\in \R}$  are the unique $\varpi$-periodic
evolution systems of measures of $\varpi$-periodic of problem \eqref{eu3}-\eqref{eu4} with $\varepsilon$ replaced by $\varepsilon_n$
and   $\varepsilon_0$, respectively, then  for each $t\in \R$,
$ \mu^{\varepsilon_{n }}_t \rightarrow \mu^{\varepsilon_0}$ weakly.
\end{thm}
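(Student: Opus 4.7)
The plan is to reduce the statement directly to the abstract Theorem \ref{cov2}, whose hypotheses $(A_2)$--$(A_4)$ have essentially all been verified elsewhere for the lattice processes $y^\varepsilon(t,s,\xi,j)$. So the proof becomes a matter of assembling the right references in the right order, with no new estimates required.

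First I would invoke Theorem \ref{Tpec}: under $(P)$, \eqref{gh}--\eqref{s2} and \eqref{mu}--\eqref{uc1}, for every $\varepsilon\in[0,1]$ the system \eqref{eu3}--\eqref{eu4} has a unique $\varpi$-periodic evolution system of measures. This legitimizes the objects $\{\mu_t^{\varepsilon_n}\}_{t\in\R}$ and $\{\mu_t^{\varepsilon_0}\}_{t\in\R}$ in the statement and tells us they are the only candidates that could arise as weak limits along subsequences.

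Next I would verify the abstract hypotheses with $H$ replaced by $l^2$. Condition $(A_2)$ is supplied by Lemma \ref{Lsc}, whose contraction estimate holds independently of $\varepsilon$. Condition $(A_3)$ -- the continuous dependence of $y^\varepsilon$ on the noise intensity parameter -- is exactly the content of Lemma \ref{Lpd}. Condition $(A_4)$, the $\varepsilon$- and $s$-uniform tightness, was established inside the proof of Theorem \ref{ctov} by combining Lemma \ref{guji} (uniform second moment bound) with Lemma \ref{weibu} (uniform tail estimate), since the union of these two gives, for each $(\eta,\xi)$, a compact set $K\subset l^2$ satisfying $P\{u^\varepsilon(t,s,\xi,j)\in K\}>1-\eta$ uniformly in $\varepsilon\in[0,1]$, $s\in\R$ and $t>s$.

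With these verifications in hand, Theorem \ref{cov2} applies directly to $y^\varepsilon$ and yields, for each $t\in\R$, that $\mu_t^{\varepsilon_n}\to\mu_t^{\varepsilon_0}$ weakly. I do not anticipate a real obstacle here: the only conceptual point one has to be slightly careful about is that Theorem \ref{cov2} produces the weak limit through subsequences and unique $\varpi$-periodicity, so one should note that the uniqueness from Theorem \ref{Tpec} forces every weakly convergent subsequence of $\{\mu_t^{\varepsilon_n}\}$ to have the same limit $\mu_t^{\varepsilon_0}$, hence the full sequence converges. This is the argument I would spell out to close the proof.
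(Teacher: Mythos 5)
Your proposal is correct and follows essentially the same route as the paper: the paper states this result as an immediate consequence of Theorem \ref{cov2}, whose hypotheses $(A_2)$--$(A_4)$ are verified for the lattice system exactly as in the proof of Theorem \ref{ctov}, and whose proof already combines the uniqueness of the $\varpi$-periodic evolution system (Theorem \ref{Tper}/\ref{Tpec}) with the subsequence compactness of Theorem \ref{cov1} to upgrade subsequential convergence to convergence of the full sequence. Your explicit spelling-out of that last uniqueness step is a useful clarification but not a different argument.
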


\end{document}